\documentclass[12pt]{amsart}

\usepackage[dvips]{graphics}
\usepackage{color, amssymb, amsbsy, amsmath, amsfonts, amssymb, 
amscd, epsfig, times, graphics}

\marginparwidth 0pt \oddsidemargin 0pt \evensidemargin 0pt
\marginparsep 0pt \topmargin 0pt \textwidth 6.5in
\textheight 8.5in

\newtheorem{theorem}{Theorem}[section]

\newtheorem{lemma}[theorem]{Lemma}

\theoremstyle{definition}
\newtheorem{definition}[theorem]{Definition}

\numberwithin{equation}{section}

\allowdisplaybreaks

\tolerance 10000
\sloppy
\begin{document}

\begin{abstract}
This paper studies the complex Monge-Amp{\`e}re equations for $\mathcal F$-plurisubharmonic functions in bounded $\mathcal F$-hyperconvex domains. We give sufficient conditions for this equation to solve for measures with a singular part.
\end{abstract}

\title[Complex Monge-Amp\`ere equations]{Complex Monge-Amp\`ere equations for plurifinely plurisubharmonic functions}


\author[N. X. Hong]{Nguyen Xuan Hong}
\address{Department of Mathematics, Hanoi National University of Education, 136 Xuan Thuy Street, Cau Giay District, Hanoi, Vietnam}\email{hongnx@hnue.edu.vn}

\author[H. V. Can]{Hoang Van Can}
\address{Department of Basis Sciences,  University of Transport Technology, 54 Trieu Khuc, Thanh Xuan District, Hanoi, Vietnam}\email{canhv@utt.edu.vn}

\author[N. T. Lien]{Nguyen Thi Lien$^\dag$}
\address{Department of Mathematics, Hanoi National University of Education, 136 Xuan Thuy Street, Cau Giay District, Hanoi, Vietnam} \email{ntlien@hnue.edu.vn}

\author[P. T. Lieu]{Pham Thi Lieu$^\ddag$}
\address{Department of Basis Sciences and Foreign Languages, People's Police University of Technology and Logistics, Bacninh, Vietnam\\ and Minh Khai, Hung Ha, Thai Binh, Vietnam}\email{ptlieu2@gmail.com}

\subjclass[2010]{32U05; 32U15}
\keywords{plurisubharmonic functions; pluripolar sets;  Monge-Amp\`ere measures}

\maketitle


\newcommand{\di}[2]{d#1(#2)}

\section{Introduction and results}

The plurifinely topology $\mathcal F$ on a  Euclidean open set $\Omega \subset \mathbb{C}^n$  is the weakest topology that makes all plurisubharmonic functions on $\Omega$ continuous. Notions pertaining to the plurifinely topology are indicated with the prefix $\mathcal F$ to distinguish them from notions pertaining to the Euclidean topology on $\mathbb C^n$. 
The notion $\mathcal F$-plurisubharmonic functions in  $\mathcal F$-open subsets $\Omega$ of $\mathbb C^n$ and basic properties of these functions are introduced in \cite{K03}. Recall that an  $\mathcal F$-upper semicontinuous function $u$ defined on an $\mathcal F$-open set $\Omega$  is $\mathcal F$-plurisubharmonic   if  for every complex line $l$ in $\mathbb C^n$, the restriction of $u$ to any $\mathcal F$-component of the finely open subset $l \cap \Omega$  of $l$ is either finely subharmonic or $\equiv -\infty$ (see \cite{ W12}). 

When $\Omega$ is Euclidean open, the class of $\mathcal F$-plurisubharmonic functions is identical to the class of plurisubharmonic functions on $\Omega$ (see \cite{KFW11}).
The Monge-Amp\`ere operator of a smooth plurisubharmonic function $u$ can be defined as $$ (dd^c u)^n=n!4^n \det \bigg( \dfrac{\partial^2 u}{\partial z_j \partial \overline {z}_k}\bigg) dV_{2n},$$
 where $dV_{2n}$ is the volume form in $\mathbb C^n.$  In 1982, E. Bedford and B. A. Taylor \cite{BT82}  gave the definition of the complex Monge-Amp\`ere operator for the class of the locally bounded plurisubharmonic functions (also see \cite{BT76, BT87}).  
After that, the  Monge-Amp\`ere operator for finite $\mathcal F$-plurisubharmonic functions in  $\mathcal F$-domain is defined by M. El Kadiri and J. Wiegerinck \cite{KW14}.  They used the  
fact that  any finite $\mathcal F$-plurisubharmonic function $u$ on an $\mathcal F$-domain  can $\mathcal F$-locally at $z\in \Omega$ be written as $f-g$ where $f,g$ are bounded plurisubharmonic functions defined on a ball about $z$. Therefore, the non-polar part $NP(dd^cu)^n$   is  $\mathcal F $-locally defined by 
$$NP(dd^cu)^n:=\sum\limits_{p=0}^n\binom{n}{p} (-1)^p (dd^cf)^{n-p}\wedge (dd^c g)^p.$$
Recently, the second author and the fourth author studied the pluripolar part $P(dd^c u)^n$ of complex Monge-Amp\`ere measures of a $\mathcal F$-plurisubharmonic function $u$ defined in a bounded $\mathcal F$-hyperconvex domain (see \cite{CL22}).


This paper aims to establish the existing solutions of the complex Monge-Amp\`ere  equations in  $\mathcal F$-hyperconvex domain of $\mathbb C^n.$ 
Firstly, we recall the definition of the $\mathcal F$-hyperconvex domain from \cite{TVH}.

\begin{definition}
{\rm A bounded, connected, $\mathcal{F}$-open set $\Omega$  is called  $\mathcal F$-hyperconvex if  there exist a negative bounded plurisubharmonic function $\gamma_{\Omega} $ defined on a bounded hyperconvex domain $\Omega'\supset\Omega$  such that $\Omega =\{\gamma_{\Omega}  >-1\}$ and $-\gamma_{\Omega} $ is $\mathcal F$-plurisubharmonic  in $\Omega$. 
}\end{definition}

Observe that every bounded hyperconvex domain is $\mathcal F$-hyperconvex. Moreover, the first author and his co-author gave in  \cite{TVH} an example to show that a bounded $\mathcal F$-hyperconvex domain with no Euclidean interior point exists. 
Our first main theorem is the following theorem about the relationship of Cegrell's classes in the hyperconvex domain and the  $\mathcal F$-hyperconvex one.

 \begin{theorem}
\label{th21}
Let $\Omega $ be a bounded $\mathcal F$-hyperconvex domain in $\mathbb C^n$ and let $D $ be a bounded hyperconvex domain containing $\Omega$. If  $u\in \mathcal E(D)$ then $u|_\Omega \in \mathcal E(\Omega)$. 
\end{theorem}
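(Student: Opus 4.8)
The plan is to reduce to a local statement and then solve it by a balayage (maximal‑minorant) construction built from the defining function $\gamma_\Omega$.

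\textbf{Reduction.} I would first recall that $u\in\mathcal E(D)$ forces $u\le 0$ on $D$, that $(dd^cu)^n$ is a well‑defined locally finite positive measure on $D$, and that membership in $\mathcal E$ is a local property. Since the sets $\Omega_c:=\{\gamma_\Omega>-1+c\}$, $0<c<1$, are $\mathcal F$‑open and exhaust $\Omega$, and every point of $\Omega$ lies in some $\Omega_c$, it is enough to produce, for each $c$, a function $v_c\in\mathcal F(\Omega)$ with $v_c=u$ on $\Omega_c$. Because $\gamma_\Omega$ is upper semicontinuous on $\Omega'$, the set $K_c:=\{\gamma_\Omega\ge -1+c\}$ is Euclidean‑compact and $\overline{\Omega_c}\subseteq K_c\subseteq\Omega\subseteq D$, so $K_c\Subset D$; consequently $(dd^cu)^n(\Omega_c)\le(dd^cu)^n(K_c)<\infty$. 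Since the level sets $\{\gamma_\Omega=t\}$, $t\in(-1,0)$, are pairwise disjoint, the finite measure $(dd^cu)^n$ charges at most countably many of them, so I may from the start take $c$ with $(dd^cu)^n(\{\gamma_\Omega=-1+c\})=0$.

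\textbf{Balayage.} For $m\in\mathbb N$ put $u^{(m)}:=\max(u,-m)$, a bounded $\mathcal F$‑psh function with $u^{(m)}\le 0$, and define
\[
v_c^{(m)}:=\Bigl(\sup\{\varphi:\ \varphi\ \text{is $\mathcal F$-psh on }\Omega,\ \varphi\le 0,\ \varphi\le u^{(m)}\ \text{on}\ \Omega_c\}\Bigr)^{*},
\]
the $\mathcal F$‑upper regularization. Since $u^{(m)}|_\Omega$ belongs to the competing family, $v_c^{(m)}\ge u^{(m)}$ on $\Omega$ and $v_c^{(m)}=u^{(m)}$ on $\Omega_c$; moreover $\max\bigl(-m,\,-N(1+\gamma_\Omega)\bigr)$ with $N\ge m/c$ also lies in the family (it equals $-m$ on $\Omega_c$ because $1+\gamma_\Omega>c$ there) and tends to $0$ at $\partial_{\mathcal F}\Omega$ since $-(1+\gamma_\Omega)$ is $\mathcal F$‑psh and vanishes there, so $v_c^{(m)}$ has $\mathcal F$‑boundary value $0$. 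A standard envelope argument shows $v_c^{(m)}$ is maximal on the $\mathcal F$‑open set $\Omega\setminus\overline{\Omega_c}^{\,\mathcal F}$, hence $(dd^cv_c^{(m)})^n$ is carried by $\overline{\Omega_c}^{\,\mathcal F}$ and $v_c^{(m)}\in\mathcal E_0(\Omega)$. As $u^{(m+1)}\le u^{(m)}$ the competing families are nested, so $v_c^{(m)}\downarrow$ some $\mathcal F$‑psh $v_c$, and $v_c=\lim_m u^{(m)}=u$ on $\Omega_c$.

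\textbf{Mass estimate and conclusion.} On $\Omega_c$ one has $(dd^cv_c^{(m)})^n=(dd^cu^{(m)})^n$, and $\sup_m(dd^cu^{(m)})^n(K_c)<\infty$ because $u\in\mathcal E(D)$ and $K_c\Subset D$. The remaining mass of $v_c^{(m)}$ is carried by $\overline{\Omega_c}^{\,\mathcal F}\setminus\Omega_c\subseteq\{\gamma_\Omega=-1+c\}$, and bounding this uniformly in $m$ is the step I expect to be the main obstacle: the naive comparison $v_c^{(m)}\ge\max(-m,-N(1+\gamma_\Omega))$ only gives a bound of order $(m/c)^n$, which blows up, so one must exploit that $v_c^{(m)}$ is the \emph{maximal} extension rather than a crude one — e.g. combine the maximum principle for $v_c^{(m)}$ on $\Omega\setminus\Omega_c$ with a Chern–Levine–Nirenberg type inequality on a slightly larger $\mathcal F$‑open neighbourhood of $\{\gamma_\Omega=-1+c\}$, using the chosen level $c$, to bound $(dd^cv_c^{(m)})^n$ there independently of $m$. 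Granting $\sup_m\int_\Omega(dd^cv_c^{(m)})^n<\infty$, the decreasing sequence $v_c^{(m)}\in\mathcal E_0(\Omega)$ witnesses $v_c\in\mathcal F(\Omega)$; since $v_c=u$ on $\Omega_c$ and $c$ was arbitrary, the local characterization of $\mathcal E(\Omega)$ yields $u|_\Omega\in\mathcal E(\Omega)$. The only pluripotential facts used beyond the definitions — positivity and weak continuity of the Monge–Amp\`ere operator on bounded $\mathcal F$‑psh functions, the comparison principle in $\mathcal E_0(\Omega)$, and the envelope/maximality lemmas — are available from \cite{KW14, CL22, TVH}.
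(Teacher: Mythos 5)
Your reduction is sensible — replacing the target set $\Omega\cap\{\varphi<-1\}$ from the paper's definition of $\mathcal E(\Omega)$ by the sublevel set $\Omega_c=\{\gamma_\Omega>-1+c\}$ is legitimate, since Definition \ref{dn2.2}(a) guarantees that $\overline{\Omega\cap\{\varphi<-1\}}$ is contained in some $\Omega_c$. The balayage construction of $v_c^{(m)}$, the maximality of $v_c^{(m)}$ off $\overline{\Omega_c}^{\,\mathcal F}$, and the observation that $v_c^{(m)}\downarrow v_c$ with $v_c=u$ on $\Omega_c$ are also all in line with what the paper does (it uses $v_j:=\sup\{\psi\in\mathcal F\text{-}\mathrm{PSH}^-(\Omega):\psi\le u_j\text{ on }\Omega\cap\{\varphi<-1\}\}$ for a decreasing sequence $\{u_j\}\subset\mathcal E_0(\Omega')$ from the Cegrell structure of $\mathcal E(\Omega')$, which plays the role of your $u^{(m)}$).

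However, the proposal has a genuine gap exactly where you flag it: the uniform bound $\sup_m\int_\Omega(dd^cv_c^{(m)})^n<\infty$. You note that the naive minorant gives only $O((m/c)^n)$ and propose to repair this by "a Chern--Levine--Nirenberg type inequality on a slightly larger $\mathcal F$-open neighbourhood of $\{\gamma_\Omega=-1+c\}$," but you do not produce such an inequality, and none of the standard Euclidean CLN or comparison arguments apply off the shelf here: the set $\Omega$ need not have Euclidean interior, so the usual localization by cutoffs fails, and the Monge--Amp\`ere mass of $v_c^{(m)}$ that you must control sits precisely on the $\mathcal F$-boundary layer $\{\gamma_\Omega=-1+c\}$ where Euclidean tools give no purchase. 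This estimate is in fact the technical core of the paper: it is the content of Lemma \ref{le1}, a bespoke plurifine integration-by-parts inequality that pushes integrals on $\{\varphi\le-\varepsilon\}$ to integrals on $\{\varphi\le-\varepsilon/4\}$ with controlled constants, and its iterated application (inequality \eqref{e24} and the chain leading to \eqref{e26}) is what converts the uniformly bounded masses $\sup_j\int_{\Omega'}(dd^cu_j)^n$ into uniformly bounded masses $\sup_j\int_\Omega(dd^cv_j)^n$. Your sketch never establishes an analogue of this lemma, so the proof as written does not close.

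A secondary but related issue: using $u^{(m)}=\max(u,-m)$ rather than a sequence $\{u_j\}\subset\mathcal E_0(\Omega')$ as in the paper loses the global mass control $\sup_j\int_{\Omega'}(dd^cu_j)^n<\infty$ that drives the final estimate. The $u^{(m)}$ are bounded plurisubharmonic functions on $D$ but do not lie in $\mathcal E_0$, so even if you had the CLN-type lemma, you would still need a separate argument to bound the relevant local masses uniformly in $m$. Replacing $u^{(m)}$ by the Cegrell approximants $u_j$ of $u$ on $\Omega'$ (as the paper does, exploiting the hypothesis $u\in\mathcal E(D)$ on the Euclidean domain) is the cleaner route; with that substitution, the remaining gap is precisely Lemma \ref{le1}.
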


The above result shows that Cegrell's classes can be naturally extended to bounded $\mathcal F$-hyperconvex domains. Therefore, we can surmise that the complex Monge-Amp\`ere equation is also solvable on pluripolar sets for the $\mathcal F$-plurisubharmonic functions. 
Moreover,  the pluripolar part of the complex Monge-Amp{\`e}re measures of the $\mathcal F$-plurisubharmonic functions is defined as follows (see  Theorem 1.3 and 1.4 in  \cite{CL22}).
\begin{definition}
Let $ D$ and $\Omega$ be as in Theorem \ref{th21}. Assume that   $u\in \mathcal F(\Omega) $ and define 
$$ \hat{u}=\hat{u}_D:=\sup \{ \varphi\in \mathcal F\text{-PSH}^-(D)   : \ \varphi \leq u  \text{ on } \Omega \}. $$
The pluripolar part $P(dd^cu)^n$ is defined as follows
$$ P(dd^cu)^n:=1_{D\cap \{\hat u =-\infty\}}(dd^c \hat u)^n .$$
\end{definition}

 The complex Monge-Amp\`ere equations for $\mathcal F$-plurisubharmonic functions is the problem of  finding a  function $u \in \mathcal F(\Omega) $ satisfying:  
$$
\mathcal {MA}(\Omega,\mu,\nu): 
\begin{cases}
P(dd^c u)^n=\mu   &\text{ in } \mathbb C^n,
		\\ NP(dd^c u)^n =\nu  &\text{ on } QB(\Omega).
\end{cases}
$$ 
Here, $\mu$ is a Borel measure in $\mathbb C^n$ and  $\nu$ is a non-negative measure  on $QB(\Omega)$. Moreover, a function $w\in \mathcal F(\Omega)$ is called sub-solution to $\mathcal {MA}(\Omega,\mu,\nu)$ if 
	$$\begin{cases} 
		 P(dd^c w)^n  \geq \mu  &\text{ in } \mathbb C^n ,
		\\  NP(dd^c w)^n  \geq \nu &\text{ on } QB(\Omega) .
			\end{cases}
			$$

When $\Omega$  is a bounded hyperconvex domain, U. Cegrell \cite{Ce2} proved the existence of classical plurisubharmonic solutions of  the problem in the case $\mu=0$ and $\nu(\Omega)<+\infty$. In 2009, P. \AA hag, U. Cegrell, R. Czy\. z and P. H. Hiep \cite{ACCH} showed that the problem can be solved if it has a sub-solution. Later, some authors investigated the existence of the problem (see \cite{Ce08, HHHP, HTH, Hong15b}).

When $\mu=0$, the first  author   gave in \cite{Hong17} sufficient conditions for which the problem   can be solved. Later on, the first and the second authors showed  in \cite{HC18b} that the problem  has solutions to a class of measures $\nu$ (also see \cite{HL21, HL22}). 

Our second main result is a result about the solvability of the problem. It is not surprising that we need to add the geometry property of $\Omega$. Specifically, we require it has the $\mathcal F$-approximation property. Here, a bounded $\mathcal F$-hyperconvex domain $\Omega$ in $\mathbb C^n$ has the $\mathcal F$-approximation property if there exists
a sequence of bounded hyperconvex domains $\Omega_j$   such that $\Omega \subset  \Omega_{j+1}\subset \Omega_j$  and  an increasing sequence of functions  $\psi_j\in \mathcal E_0(\Omega_j)$ that converges a.e. to a function $\psi \in \mathcal E_0(\Omega)$ on $\Omega$ (see \cite{Be06, HC18a, TVH}). We prove the following.

\begin{theorem} \label {th41}
Let $\Omega$ be a bounded $\mathcal F$-hyperconvex domain in $\mathbb C^n$ that has the $\mathcal F$-approximation property. 
If  the problem $\mathcal {MA}(\Omega,\mu,\nu)$ has a sub-solution $w$ then it can be solved.
\end{theorem}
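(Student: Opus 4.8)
The plan is to use the $\mathcal F$-approximation property to transfer the problem onto honest bounded hyperconvex domains, where the solvability theorem of \cite{ACCH} applies, and then to recover a solution on $\Omega$ by a monotone limiting argument. First I would extract two facts from the existence of the sub-solution $w$: since $P(dd^c w)^n\ge\mu$ and the pluripolar part is always carried by the pluripolar set $\{\hat w=-\infty\}$, the measure $\mu$ is carried by a pluripolar set; and since $NP(dd^c w)^n\ge\nu$, the measure $\nu$ charges no pluripolar subset of $\Omega$. Then, by truncating $\mu$ and $\nu$ against an increasing exhaustion of $\mathcal F$-compact sets, solving for each truncation, and passing to a monotone limit of solutions via the comparison principle, it suffices to treat the case $\mu(\mathbb C^n)+\nu(QB(\Omega))<+\infty$.

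Next I would move the data onto the approximating domains. Fix a bounded hyperconvex $D\supset\Omega$ with $\{\gamma_\Omega>-1\}=\Omega$, and, intersecting with $D$ if necessary, arrange $\Omega\subset\Omega_{j+1}\subset\Omega_j\subset D$ with $\psi_j\in\mathcal E_0(\Omega_j)$ increasing to $\psi\in\mathcal E_0(\Omega)$ as in the $\mathcal F$-approximation property. Regarding $\mu$ and $\nu$ as finite Borel measures on $\Omega_j$ (legitimate since $\mu$ is pluripolar and $\nu$ is supported in $\Omega\subset\Omega_j$), I would check that $\hat w$, together with the weight $\psi_j$, yields a sub-solution of the ordinary Monge-Amp\`ere problem on $\Omega_j$ with right-hand side $\mu+\nu$; applying \cite{ACCH} (and \cite{Ce2} for the part vanishing on pluripolar sets) produces $u_j\in\mathcal F(\Omega_j)$ with $\mathbf 1_{\{u_j=-\infty\}}(dd^c u_j)^n=\mu$ and $\mathbf 1_{\{u_j>-\infty\}}(dd^c u_j)^n=\nu$, and with $u_j\ge\hat w$ on $\Omega_j$. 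Using uniqueness of the solution together with $\Omega_{j+1}\subset\Omega_j$ I would then arrange the sequence $(u_j)$ to be decreasing in $j$.

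Finally I would set $u:=(\lim_j u_j)|_\Omega$ and identify its Monge-Amp\`ere measures. Since $\hat w\le u_j$ and $\hat w|_\Omega$ lies in $\mathcal F(\Omega)$ by Theorem \ref{th21}, the limit is not $\equiv-\infty$; each $u_j|_\Omega$ lies in $\mathcal F(\Omega)$ by Theorem \ref{th21}, and the total Monge-Amp\`ere masses $\mu(\mathbb C^n)+\nu(QB(\Omega))$ are uniformly bounded in $j$, so $u\in\mathcal F(\Omega)$. For the non-pluripolar part, $NP(dd^c u)^n$ is computed $\mathcal F$-locally, writing $u_j=f_j-g_j$ with $f_j,g_j$ bounded plurisubharmonic on an $\mathcal F$-ball; along the decreasing sequence $u_j\downarrow u$ the Bedford-Taylor operator is continuous, so $NP(dd^c u_j)^n\to NP(dd^c u)^n$ and the limit equals $\nu$ on $QB(\Omega)$. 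For the pluripolar part I would show that the extensions $\widehat{u_j|_{\Omega}}$ decrease to $\hat u=\hat u_D$, invoke continuity of the complex Monge-Amp\`ere operator along such decreasing sequences in $\mathcal F(D)$ together with the uniformity provided by $\psi_j\nearrow\psi$, and conclude $P(dd^c u)^n=\lim_j P(dd^c u_j)^n=\mu$.

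The main obstacle is this last step: ensuring that no Monge-Amp\`ere mass escapes or concentrates in the limit $j\to\infty$, and in particular that the pluripolar parts converge exactly to $\mu$ rather than to something strictly smaller. This is precisely the role of the $\mathcal F$-approximation property: the functions $\psi_j\in\mathcal E_0(\Omega_j)$ increasing to $\psi\in\mathcal E_0(\Omega)$ furnish the comparison functions and uniform integrability estimates needed to run Cegrell-Ko\l odziej type convergence arguments uniformly over the $\Omega_j$ and then to descend the conclusion to $\Omega$; without such data the limit need not even belong to $\mathcal F(\Omega)$ with the prescribed mass.
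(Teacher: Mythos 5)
Your strategy is genuinely different from the paper's. You propose to solve the combined problem $(dd^c u_j)^n=\mu+\nu$ on the approximating hyperconvex domains $\Omega_j$ and then pass to a monotone limit and identify $P$ and $NP$ separately. The paper instead decouples the two parts: Step~1 constructs $g\in\mathcal F(\Omega)$ with $P(dd^c g)^n=\mu$ and $NP(dd^c g)^n=0$; Step~2 then solves the non-pluripolar equation on the increasing $\mathcal F$-open sets $U_j=\Omega\cap\{h>-j\}$, on which $g$ is \emph{bounded below}, producing a decreasing sequence $\psi_j$ sandwiched as $w_1+g\leq\psi_j\leq g$; Step~3 invokes Lemma~\ref{le28} (which itself uses the $\mathcal F$-approximation property) to transfer the pluripolar mass $\mu$ from $g$ to the limit $u$ via that sandwich. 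Nothing in the paper relies on convergence of the $NP$-part along an unbounded decreasing sequence.

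That is precisely where your argument has a genuine gap. You claim that ``along the decreasing sequence $u_j\downarrow u$ the Bedford--Taylor operator is continuous, so $NP(dd^c u_j)^n\to NP(dd^c u)^n$.'' This is false in general: the non-pluripolar Monge--Amp\`ere operator is not continuous under monotone decreasing limits of unbounded plurisubharmonic functions. Already in $\mathbb C$, $\max(\log|z|,-j)\downarrow\log|z|$ has $NP$-mass $1$ for each $j$ but the limit has $NP$-mass $0$; the mass migrates onto the pluripolar set. In your scheme each $u_j$ carries the full pluripolar mass $\mu$, so near $\{u=-\infty\}$ there is no $j$-uniform local decomposition $u_j=f_j-g_j$ with $f_j,g_j$ bounded, and Bedford--Taylor continuity simply does not apply. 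Your final sentence about $P(dd^c u)^n=\lim_j P(dd^c u_j)^n$ is similarly unsupported: you would need exactly the kind of two-sided bound $w_1+g\leq u\leq g$ with $w_1\in\mathcal F^a$ that the paper builds into Lemma~\ref{le28}, not a convergence-of-measures argument. (Two smaller remarks: the subsolution $w\in\mathcal F(\Omega)$ already forces $\mu(\mathbb C^n)+\nu(QB(\Omega))<\infty$, so the preliminary truncation is unnecessary; and the comparison principle for $\Omega_{j+1}\subset\Omega_j$ would make your $u_j$ \emph{increasing}, not decreasing, which further undercuts the appeal to decreasing continuity.)
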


Note that we haven't controlled the pluripolar part of the complex Monge-Amp{\`e}re measures of the $\mathcal F$-plurisubharmonic functions before. The above Theorem  also brings us more information about it.
Now, let $\Omega$ be a bounded $\mathcal F$-hyperconvex domain without Euclidean interior points. Such domains exist. Assume that $a\in \Omega$ and $\varphi\in \mathcal E_0(\Omega)$ such that $\varphi(a)<-1$. Let $r>0$ be such that 
$$\Omega\Subset B(a,r):=\{z\in \mathbb C^n: \|z-a\|<r\} .$$
Since $f(z):=\log\|z-a\| -\log r\in \mathcal F(B(a,r))$, Theorem \ref{th21} tells us that $f\in \mathcal E(\Omega)$. By Definition \ref{dn2.2}, we can find $ w\in \mathcal F(\Omega)$ such that  $w\geq f$ in $\Omega$ and
$$
w=f \text{ on } \Omega \cap \{\varphi<-1\}.
$$
Because $\hat w_{ B(a,r)}=f$ in $  B(a,r)$, we have
$$P(dd^c w)^n = 1_{\{f=-\infty\}}(dd^c f)^n .$$
Hence, $w$ will satisfy all the assumptions of Theorem \ref{th41} with
$$
\mu:=1_{\{f=-\infty\}}(dd^c f)^n \neq 0 \text{ and } \nu:=0.
$$
Thus, Theorem \ref{th41} is a generalization of the results of  P. \AA hag, U. Cegrell, R. Czy\. z and P. H. Hiep results in \cite{ACCH}.

Our paper is organized as follows. In Section 2, we study  Cegrell's classes of the $\mathcal F$-plurisubharmonic functions and prove Theorem \ref{th21}.  
Section 3 is devoted to the solvability of the  Monge-Amp\`ere equations in the class $\mathcal F(\Omega)$.

\section{Cegrell's classes of $\mathcal F$-plurisubharmonic functions}
Firstly, we recall the definition of the non-polar part of  $\mathcal F$-plurisubharmonic functions from \cite{KW14}.

\begin{definition}{\rm 
Let $\Omega\subset \mathbb C^n$ be an $\mathcal F$-open set   and let $\mathcal {F}\text{-}PSH(\Omega)$ be the $\mathcal F\text{-}$plurisubharmonic functions in $\Omega$. Denote by    $QB(\mathbb C^n)$  the measurable space on $\mathbb C^n$ generated by the Borel sets and the pluripolar subsets of  $\mathbb C^n$ and $QB(\Omega)$  is  the trace of $QB(\mathbb C^n)$ on $\Omega$. Assume that $u  \in \mathcal F\text{-} PSH(\Omega)$. Then, we can find  a pluripolar set $E $ and   bounded plurisubharmonic functions $f_{j}, g_{j}$ defined in Euclidean neighborhoods of $\mathcal F$-open sets $O_j$  such that   
 $$\Omega  =E \cup \bigcup_{j =1}^\infty O_j  \text{ and  } u=f_{j} -g_{j} \text{ on }O_j .$$  
 The  non-polar part $NP(dd^c u)^n$ of $\mathcal F$-plurisubharmonic function  $u$ is defined by
$$
\int_A  NP (dd^c u)^n     := \sum_{j=1}^\infty \int_{A\cap ( O_j \backslash \bigcup_{k=1}^{j-1} O_k ) }  (dd^c ( f_{j} -g_{j}))^n, \ \ A\in QB(\Omega).
$$  
}\end{definition}

The following definition of Cegrell's classes for $\mathcal F$-plurisubharmonic functions was given in \cite{TVH} (also in \cite{ACCH},\cite{Ce2}, \cite{Ce08},\cite{Xing}).
\begin{definition}
\label{dn2.2}
{\rm 
Let $\Omega$ be a bounded $\mathcal F$-hyperconvex domain in $\mathbb C^n $ and let $\gamma_\Omega$ be a negative bounded plurisubharmonic function  defined in a bounded hyperconvex domain $\Omega'$ such that  $-\gamma_\Omega $ is $\mathcal F$-plurisubharmonic in $\Omega$ and 
$$\Omega = \Omega' \cap \{\gamma_\Omega > -1 \}.$$

(a)
We say that a bounded, negative $\mathcal F$-plurisubharmonic function $ u $ defined on $\Omega$ belongs to $\mathcal E_0(\Omega)$ if 
$$ \int_\Omega  (dd^c u )^n < +\infty$$
 and for every $\epsilon >0$ there exists $\delta >0$ such that  
 $$\overline{\Omega \cap \{ u<-\epsilon \}} \subset \Omega' \cap\{ \gamma_\Omega >-1+\delta \}.$$

(b) Denote by $\mathcal F(\Omega)$  the family of $\mathcal F$-plurisubharmonic functions $u$ in $\Omega$  such that   there exists a decreasing sequence $\{u_j\}\subset \mathcal E_0(\Omega)$ that converges pointwise to $u$ on $\Omega  $ and satisfies 
$$
\sup_{j\geq 1} \int_\Omega  (dd^c u_j )^n < +\infty.
$$

(c) Let $\mathcal E(\Omega)$ be the set of $\mathcal F$-plurisubharmonic functions $u$ in $\Omega$ such that for each $\varphi\in \mathcal E_0(\Omega)$, there exists a function $v\in  \mathcal F(\Omega)$ satisfying $v\geq u$ in $\Omega$ and $v=u$ in $\Omega\cap \{ \varphi<-1\}$.
} \end{definition}

\begin{lemma}
\label{le30}
	Let $\Omega$ be a bounded $\mathcal F$-hyperconvex domain in $\mathbb C^n$ and let $\varphi\in\mathcal E_0(\Omega)$. Then, for every $\varepsilon>0$, there exist bounded plurisubharmonic functions $\tilde \varphi$ and $\tilde \varphi_\varepsilon$ defined in bounded hyperconvex domain $\Omega' \supset \Omega$ such that 
\begin{align*}
	\Omega \cap  \{\varphi<-\varepsilon\} =  \Omega' \cap \{\tilde \varphi<\tilde \varphi_\varepsilon\}.
\end{align*} 
\end{lemma}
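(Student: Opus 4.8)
The plan is to unwind the two defining conditions of $\varphi \in \mathcal E_0(\Omega)$ — namely that $\varphi$ is a bounded negative $\mathcal F$-plurisubharmonic function on $\Omega$, and that for the given $\varepsilon$ there is a $\delta > 0$ with $\overline{\Omega \cap \{\varphi < -\varepsilon\}} \subset \Omega' \cap \{\gamma_\Omega > -1+\delta\}$ — and to produce the two genuine psh functions $\tilde\varphi, \tilde\varphi_\varepsilon$ on $\Omega'$ by gluing a candidate built from $\varphi$ on $\Omega$ to something built from $\gamma_\Omega$ near $\partial\Omega$. First I would fix $\varepsilon > 0$, choose the associated $\delta$, and set $\tilde\varphi_\varepsilon := -\varepsilon$ (a constant, hence trivially psh on all of $\Omega'$) so that on $\Omega$ the inequality $\tilde\varphi < \tilde\varphi_\varepsilon$ simply reads $\tilde\varphi < -\varepsilon$. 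The task then reduces to constructing a bounded psh function $\tilde\varphi$ on $\Omega'$ whose sub-$(-\varepsilon)$ set, intersected with $\Omega'$, is exactly $\Omega \cap \{\varphi < -\varepsilon\}$. I would try $\tilde\varphi := \max(\varphi^*, A(\gamma_\Omega + 1 - \delta) - \varepsilon)$ on $\Omega$ — where $\varphi^*$ denotes an appropriate psh representative/extension of $\varphi$ — and $\tilde\varphi := A(\gamma_\Omega + 1 - \delta) - \varepsilon$ on $\Omega' \setminus \Omega$, with the constant $A > 0$ chosen large enough (using boundedness of $\varphi$, say $\varphi \geq -M$, take $A\delta \geq M$, or more carefully $A\delta \geq M - \varepsilon$) that the second term dominates near $\{\gamma_\Omega = -1+\delta\}$, so the two local definitions agree on an $\mathcal F$-neighborhood of the relevant part of $\partial\Omega$ and the glued function is psh on $\Omega'$.

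The point of the gluing is: on the set $\Omega' \cap \{\gamma_\Omega \leq -1+\delta\}$, which by the containment $\overline{\Omega \cap \{\varphi<-\varepsilon\}} \subset \{\gamma_\Omega > -1+\delta\}$ lies outside (the closure of) $\{\varphi < -\varepsilon\}$, one has $\varphi \geq -\varepsilon$ there (on $\Omega$), and also $A(\gamma_\Omega+1-\delta) - \varepsilon \leq -\varepsilon$; so on that region $\tilde\varphi \leq -\varepsilon$ wherever it is the max with the $\varphi$-term, while on $\Omega'\setminus\Omega$ it equals $A(\gamma_\Omega+1-\delta)-\varepsilon \leq -\varepsilon$. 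Conversely, on $\Omega \cap \{\varphi < -\varepsilon\}$ we are in $\{\gamma_\Omega > -1+\delta\}$, hence $A(\gamma_\Omega+1-\delta) - \varepsilon > -\varepsilon > \varphi$, so $\tilde\varphi = A(\gamma_\Omega+1-\delta)-\varepsilon$? — no: that is $> -\varepsilon$, which would wrongly exclude these points. So instead I want the $\varphi$-term to dominate on $\{\varphi<-\varepsilon\}$: I should reverse the roles and take $\tilde\varphi := \max(\varphi^* , -\varepsilon + A(\gamma_\Omega + 1))$ capped appropriately, or more cleanly set things up so that $\{\tilde\varphi < -\varepsilon\} \cap \Omega'$ picks out precisely $\{\varphi < -\varepsilon\}$ inside $\Omega$ and is empty outside. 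The honest device is: let $\tilde\varphi := \varphi^*$ on $\Omega$ and $\tilde\varphi := -\varepsilon$ on $\Omega' \setminus (\Omega \cap \{\varphi < -\varepsilon\})$'s complement — but this is not psh unless the gluing region is handled by a $\max$. I would therefore settle on $\tilde\varphi_\varepsilon := -\varepsilon$ and $\tilde\varphi := \max\bigl(\varphi^*,\, -\varepsilon + A(\gamma_\Omega+1-\delta)\bigr)$ on $\Omega$, extended by $-\varepsilon + A(\gamma_\Omega + 1 - \delta)$ (which is $\leq -\varepsilon$ on $\{\gamma_\Omega \leq -1+\delta\}$, equals $-\varepsilon$ on $\{\gamma_\Omega=-1+\delta\}$, and is $>-\varepsilon$ where $\gamma_\Omega > -1+\delta$) outside $\Omega$; then truncate by $\min(\cdot, -\varepsilon)$? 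That kills the psh property again. The resolution is to keep $\tilde\varphi$ as just written (no truncation) and note $\{\tilde\varphi < -\varepsilon\} = \{\varphi^* < -\varepsilon\} \cap \{-\varepsilon + A(\gamma_\Omega+1-\delta) < -\varepsilon\} = \{\varphi < -\varepsilon\} \cap \{\gamma_\Omega < -1+\delta\}$ inside $\Omega$, which by the $\mathcal E_0$-containment equals $\emptyset$, not $\{\varphi<-\varepsilon\}$. Hence the clean choice is the opposite sign convention for $\tilde\varphi_\varepsilon$: take $\tilde\varphi := \varphi^*$ on $\Omega$, $\tilde\varphi := $ (large negative) on $\Omega'\setminus\Omega$, and $\tilde\varphi_\varepsilon := \max(-\varepsilon,\, C(\gamma_\Omega + 1 - \delta))$ or similar, so that $\{\tilde\varphi < \tilde\varphi_\varepsilon\}$ equals $\{\varphi < -\varepsilon\}$ on $\{\gamma_\Omega > -1+\delta\}$ (where $\tilde\varphi_\varepsilon = -\varepsilon$) and is empty on $\{\gamma_\Omega \leq -1+\delta\}$ (where $\tilde\varphi_\varepsilon$ is very negative and $\tilde\varphi \geq \tilde\varphi_\varepsilon$).

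Concretely, then: set $\tilde\varphi_\varepsilon := \max\bigl(-\varepsilon,\ B(\gamma_\Omega + 1 - \delta)\bigr)$ on $\Omega'$ with $B$ large, and $\tilde\varphi := \varphi^*$ on a neighborhood of $\overline{\Omega\cap\{\varphi<-\varepsilon\}}$ glued to $B(\gamma_\Omega+1-\delta)$ elsewhere on $\Omega'$, the gluing being legitimate because on the overlap region $\{-1+\delta < \gamma_\Omega\}$ near $\partial\Omega$ one uses boundedness $\varphi^* \geq -M$ and chooses $B$ so that $B(\gamma_\Omega+1-\delta)$ falls below $-M$ before $\gamma_\Omega$ reaches $-1+\delta$ — so that $\tilde\varphi = \max(\varphi^*, B(\gamma_\Omega+1-\delta))$ is unambiguously $\varphi^*$ there and drops to the $\gamma_\Omega$-branch only on $\{\gamma_\Omega \leq -1+\delta\}$ where it equals $B(\gamma_\Omega+1-\delta) \leq 0$; one checks $\{\tilde\varphi < \tilde\varphi_\varepsilon\} \cap \Omega'$ is $\{\varphi < -\varepsilon\}$ on the part where $\gamma_\Omega > -1 + \delta$ and empty on the part where $\gamma_\Omega \le -1+\delta$ since there $\tilde\varphi \ge \tilde\varphi_\varepsilon$. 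The main obstacle is exactly this bookkeeping of the gluing near $\partial\Omega$: one must verify that the competing branches cross in the right order on $\{\gamma_\Omega \le -1+\delta\}$, that the resulting $\tilde\varphi$ is genuinely psh (not merely $\mathcal F$-psh) on all of $\Omega'$ — which hinges on the $\varphi^*$-branch being psh in a Euclidean neighborhood, a point where the $\mathcal E_0$-containment $\overline{\Omega\cap\{\varphi<-\varepsilon\}}\subset\Omega'\cap\{\gamma_\Omega>-1+\delta\}$ does real work by keeping the $\varphi^*$-branch away from the bad boundary — and that the exceptional pluripolar set where $\varphi^* \ne \varphi$ does not corrupt the identity of the two sublevel sets. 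Once the gluing is set up correctly, the equality of sets is a direct case check.
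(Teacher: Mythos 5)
Your plan correctly identifies that the containment
$\overline{\Omega\cap\{\varphi\le -\varepsilon\}}\subset\Omega'\cap\{\gamma_\Omega>-1+\delta\}$ is where the work happens and that a $\max$-gluing over $\Omega'$ is the right device, but there is a genuine gap at the heart of your construction: the ``psh representative/extension $\varphi^*$'' of $\varphi$ on a Euclidean neighborhood does not exist in general. The domain $\Omega$ is only $\mathcal F$-open and may have no Euclidean interior at all (the paper even emphasizes such examples), so $\varphi$ is merely $\mathcal F$-psh there, and there is no mechanism to upgrade it directly to a psh function on a Euclidean open set. That upgrade is precisely what the lemma must produce, and ``take $\varphi^*$'' is a placeholder for the entire content of the proof, not a step of it.

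The paper's device to close this gap is different and is the one piece you are missing. For $f\in\mathcal F\text{-}PSH(\Omega)$ with $-1<f<0$, it forms $\tilde f:=\max(-\delta^{-1},\,f+\delta^{-1}\gamma)$ on $\Omega$ and sets $\tilde f\equiv-\delta^{-1}$ on $\Omega'\setminus\Omega$; the added term $\delta^{-1}\gamma$ (whose negative is $\mathcal F$-psh on $\Omega$, so $f+\delta^{-1}\gamma$ is still $\mathcal F$-psh there) forces $f+\delta^{-1}\gamma\to-\delta^{-1}$ as $\gamma\to-1^+$, so the two branches glue to an $\mathcal F$-psh function on all of $\Omega'$ by \cite[Prop.\ 2.3]{KS14}, and since $\Omega'$ is Euclidean open this function is genuinely psh by \cite[Prop.\ 2.14]{KFW11}. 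Applying this with $f=\varphi$ and $f=\varphi_\varepsilon:=\max(\varphi,-\varepsilon)$ gives $\tilde\varphi,\tilde\varphi_\varepsilon$ with the crucial identities $f=\tilde f-\delta^{-1}\gamma$ on $\{\gamma>-1+\delta\}$ and $\tilde\varphi=\tilde\varphi_\varepsilon$ on $\{\gamma\le-1+\delta\}\cup(\Omega'\setminus\Omega)$ (since $\varphi=\varphi_\varepsilon$ there by the $\mathcal E_0$-containment), from which $\Omega\cap\{\varphi<-\varepsilon\}=\Omega'\cap\{\tilde\varphi<\tilde\varphi_\varepsilon\}$ follows by a clean two-case check. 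Your choice $\tilde\varphi_\varepsilon:=\max(-\varepsilon,\,B(\gamma_\Omega+1-\delta))$ also does not yield the set identity: for $B>0$ one has $B(\gamma_\Omega+1-\delta)>0>-\varepsilon$ on $\{\gamma_\Omega>-1+\delta\}$, so there $\tilde\varphi_\varepsilon>0$ and $\{\tilde\varphi<\tilde\varphi_\varepsilon\}$ is far larger than $\{\varphi<-\varepsilon\}$. The key structural point you should absorb is that $\tilde\varphi_\varepsilon$ must be built from $\varphi_\varepsilon=\max(\varphi,-\varepsilon)$ by \emph{the same} transform as $\tilde\varphi$ is built from $\varphi$, so that the two agree exactly where $\varphi=\varphi_\varepsilon$, rather than taking $\tilde\varphi_\varepsilon$ to be an unrelated barrier.
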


\begin{proof}
Without loss of generality we can assume that $-1<\varphi<0$ in $\Omega$. We set 
$$\varphi_\varepsilon:=\max(\varphi, -\varepsilon).$$ 
Let  $\gamma$ be a negative bounded plurisubharmonic function defined on a bounded hyperconvex domain $\Omega'$  such that   $-\gamma$ is $\mathcal F$-plurisubharmonic in $\Omega$ and satisfies 
	$$
		\Omega= \Omega' \cap \{\gamma  >-1\}.
	$$  
Let  $\delta\in(0,1)$ be such that 
\begin{equation}
	\label{e31-}
	\Omega\cap \{ \varphi \leq -\varepsilon\} \subset \Omega'\cap \{\gamma > -1+ 2 \delta\}.
\end{equation}
Assume that $f\in \mathcal F \text{-}PSH(\Omega)$ with $-1<f<0$ and define
$$\tilde f := 
\begin{cases}
\max(-  \delta^{-1}, f +\delta^{-1} \gamma  ) & \text{ in } \Omega,
\\ -  \delta^{-1}& \text{ in } \Omega' \backslash \Omega .
\end{cases}
$$  
Proposition 2.3 in \cite{KS14} tells us that $\tilde f$ is a $\mathcal F$-plurisubharmonic function in $\Omega'$, and hence, Proposition 2.14 in \cite{KFW11} implies that  $\tilde f$ is plurisubharmonic in $\Omega'$ because $\Omega'$ is a  Euclidean open set. 
It is easy to see that. 
$$
f= \tilde f - \delta^{-1} \gamma\text{ on } \Omega' \cap \{\gamma>-1 + \delta \}.
$$
 We deduce by \eqref{e31-} that 
\begin{align*}
	\Omega \cap  \{\varphi<-\varepsilon\} = \Omega \cap \{\varphi<\varphi_\varepsilon\}=  \Omega' \cap \{\tilde \varphi<\tilde \varphi_\varepsilon\}.
\end{align*} 
	The proof is complete. 
\end{proof}

\begin{lemma}
\label{le30'}
	Let $\Omega$ be a bounded $\mathcal F$-hyperconvex domain in $\mathbb C^n$ and let $\varphi \in \mathcal E_0(\Omega)$. Assume that  $u$  is  a  $\mathcal F$-plurisubharmonic function in $\Omega$. Then, $u \in  \mathcal E(\Omega)$ if and only if there exists a decreasing sequence $\{u_j\} \subset \mathcal F(\Omega)$ such that 
	$$
		u_j =u \text{ on } \Omega \cap \{j \varphi<-1\}.
	$$
\end{lemma}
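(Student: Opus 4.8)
The plan is to prove both implications by reducing the membership problem in $\mathcal E(\Omega)$ to a statement about a single test function, using Lemma \ref{le30} to transfer the sets $\{j\varphi<-1\}=\{\varphi<-1/j\}$ into sublevel sets of genuine plurisubharmonic functions on the ambient hyperconvex domain $\Omega'$. For the forward direction, suppose $u\in\mathcal E(\Omega)$. For each $j$ apply the defining property in Definition \ref{dn2.2}(c) with the test function $\varphi_j:=j\varphi\in\mathcal E_0(\Omega)$ (note $j\varphi$ lies in $\mathcal E_0(\Omega)$ after the usual rescaling/truncation, since $\varphi\in\mathcal E_0(\Omega)$): this produces $v_j\in\mathcal F(\Omega)$ with $v_j\ge u$ on $\Omega$ and $v_j=u$ on $\Omega\cap\{j\varphi<-1\}$. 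These $v_j$ need not be monotone, so I would pass to $u_j:=(\max(v_1,\dots,v_j))$ or, better, to $u_j:=\big(\sup_{k\ge j}v_k\big)^*$ truncated appropriately; since $\{\varphi<-1/k\}$ is increasing in $k$, on $\Omega\cap\{j\varphi<-1\}$ every $v_k$ with $k\le j$ equals $u$ there while $v_k\ge u$ for $k>j$, so the regularized supremum still equals $u$ on that set, and it is a decreasing sequence in $j$. One must check $u_j\in\mathcal F(\Omega)$: this follows because $\mathcal F(\Omega)$ is stable under taking (regularized) suprema of families bounded above by an element of $\mathcal F(\Omega)$ with uniformly bounded total Monge–Amp\`ere mass — here $u$ itself (or rather $\max(u,v_1)$) serves as the bound. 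This gives the desired sequence.

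For the converse, assume we are given a decreasing sequence $\{u_j\}\subset\mathcal F(\Omega)$ with $u_j=u$ on $\Omega\cap\{j\varphi<-1\}$. I must show $u\in\mathcal E(\Omega)$, i.e. for an arbitrary $\psi\in\mathcal E_0(\Omega)$ I must construct $v\in\mathcal F(\Omega)$ with $v\ge u$ and $v=u$ on $\Omega\cap\{\psi<-1\}$. The key geometric input is that $\{\psi<-1\}$ is relatively compact inside $\Omega$ in the plurifine sense — more precisely, by the $\mathcal E_0$-condition, $\overline{\Omega\cap\{\psi<-1\}}\subset\Omega'\cap\{\gamma_\Omega>-1+\delta\}$ for some $\delta>0$. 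Using Lemma \ref{le30} I can realize $\Omega\cap\{\psi<-1/2\}$ (say) as $\Omega'\cap\{\tilde\psi<\tilde\psi_{1/2}\}$ with $\tilde\psi,\tilde\psi_{1/2}$ bounded psh on $\Omega'$, and similarly $\Omega\cap\{j\varphi<-1\}=\Omega'\cap\{\tilde\varphi^{(j)}<\tilde\varphi^{(j)}_*\}$. The point is then to choose $j$ large enough that $\{\psi<-1\}\subset\{j\varphi<-1\}$ up to a pluripolar set — this is where I expect the main difficulty. A priori there is no containment between sublevel sets of two different $\mathcal E_0$-functions, so one cannot simply pick $j$. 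Instead I would use that $\varphi,\psi$ are both $\mathcal F$-continuous and negative with $\{\varphi=0\}$, $\{\psi=0\}$ carrying no mass near $\partial\Omega$, together with the $\mathcal F$-approximation-type comparison: replace $\varphi$ if necessary by $\varphi':=\min(\varphi,\psi)\in\mathcal E_0(\Omega)$ — but then the hypothesis is about $\varphi$, not $\varphi'$.

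The cleanest route around this obstacle is to observe that the statement is really \emph{independent of the choice of $\varphi\in\mathcal E_0(\Omega)$}: one should first prove that if the sequence-condition holds for \emph{one} $\varphi\in\mathcal E_0(\Omega)$ then it holds for \emph{every} $\psi\in\mathcal E_0(\Omega)$, and this equivalence is exactly what makes both directions go through, since Definition \ref{dn2.2}(c) quantifies over all test functions. To pass from $\varphi$ to $\psi$: given the sequence $\{u_j\}$ for $\varphi$, set $v:=\big(\sup_j \max(u_j,\,\text{something vanishing outside }\{\psi<-1\})\big)^*$; more concretely, for each $j$ glue $u_j$ (on the plurifine-open set $\{j\varphi<-1\}$) with $u$ extended by a large psh barrier outside, using the gluing Proposition 2.3 of \cite{KS14} cited in the proof of Lemma \ref{le30}, to get candidates $w_j\in\mathcal F(\Omega)$ equal to $u$ on $\{j\varphi<-1\}\cap\{\psi<-1\}$ and $\ge u$ everywhere; since $\bigcup_j\{j\varphi<-1\}=\{\varphi<0\}\supset\Omega$ up to the pluripolar set $\{\varphi=0\}$, the regularized supremum $v$ of the $w_j$ equals $u$ quasi-everywhere on $\{\psi<-1\}$, hence everywhere there by $\mathcal F$-upper-semicontinuity arguments, and $v\in\mathcal F(\Omega)$ by the same uniform-mass stability as before. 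Verifying that $v=u$ \emph{everywhere} (not just q.e.) on $\Omega\cap\{\psi<-1\}$ — controlling the pluripolar discrepancy — is the delicate technical point, and I would handle it exactly as in the definition of $P(dd^c u)^n$, passing through $\hat u_{\Omega'}$ so that the pluripolar set $\{\varphi=0\}$ is absorbed. Once this $\varphi$-independence is in hand, the forward direction is Definition \ref{dn2.2}(c) applied with $\psi=j\varphi$ as above, and the converse is the construction just described; this completes the proof.
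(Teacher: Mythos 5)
Your proof of the forward implication is essentially the paper's proof: given $u\in\mathcal E(\Omega)$, for each $j$ one invokes Definition~\ref{dn2.2}(c) with the test function $j\varphi\in\mathcal E_0(\Omega)$ to get $v_j\in\mathcal F(\Omega)$ with $v_j\ge u$ and $v_j=u$ on $\Omega\cap\{j\varphi<-1\}$, and then sets $u_j$ to be the $\mathcal F$-upper semicontinuous majorant of $\sup_{k\ge j}v_k$. Two small inaccuracies in your write-up of this part, though neither is fatal: first, you have the indices backwards when you assert that on $\Omega\cap\{j\varphi<-1\}$ ``every $v_k$ with $k\le j$ equals $u$ there while $v_k\ge u$ for $k>j$'' --- since $\{j\varphi<-1\}\subset\{k\varphi<-1\}$ precisely when $k\ge j$, it is the $v_k$ with $k\ge j$ (the ones actually entering the supremum) that all equal $u$ on $\{j\varphi<-1\}$, which is what makes $u_j=u$ there. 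Second, to conclude $u_j\in\mathcal F(\Omega)$ you appeal to a uniform-mass stability property of $\mathcal F(\Omega)$ under regularized suprema; the paper uses the simpler sandwich $v_j\le u_j<0$ with $v_j\in\mathcal F(\Omega)$, and that alone suffices. Also, your first candidate $u_j:=\max(v_1,\dots,v_j)$ would be \emph{increasing} in $j$, not decreasing; you correctly discard it in favor of the regularized $\sup_{k\ge j}$.

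Regarding the converse implication, which takes up most of your write-up: you correctly identify the genuine obstruction --- Definition~\ref{dn2.2}(c) quantifies over \emph{all} test functions $\psi\in\mathcal E_0(\Omega)$, whereas the lemma's hypothesis fixes a single $\varphi$, and there is no reason for $\{\psi<-1\}$ to be contained in $\{j\varphi<-1\}$ for any finite $j$. Your proposed gluing argument is a reasonable sketch of how one might pass from $\varphi$ to an arbitrary $\psi$, but as you acknowledge the pluripolar-discrepancy step is not closed, so your converse remains incomplete. It is worth noting that the paper's own proof of Lemma~\ref{le30'} establishes \emph{only} the forward direction and simply stops there, offering no argument for the converse; since in the rest of the paper (specifically in the proof of Theorem~\ref{th41}) only the forward implication is ever invoked, this omission has no downstream consequences, but the ``if and only if'' in the statement is not actually fully justified by the proof given. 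So the gap you flag is real, and it is a gap in the source as well as in your attempt.
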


\begin{proof}
Since $j\varphi \in \mathcal E_0(\Omega)$,   there exists a function $v_j \in \mathcal F(\Omega)$ such that 
	$$
		v_j =u \text{ on } \Omega \cap \{j \varphi<-1\}.
	$$
	Let $u_j$ be the $\mathcal F$-upper semi-continuous majorant  of $\sup_{k\geq j} v_k $	in $\Omega$.  
	It is easy to see that $\{u_j\}$ is a decreasing sequence of $\mathcal F$-plurisubharmonic functions in $\Omega$. 
	Since 
	 $$\{j\varphi<-1\} \subset \{k\varphi<-1\}, \ \forall k\geq j,$$
	 it follows that 
	 $$v_k=u \text{ on } \{j\varphi<-1\}, \ \forall k\geq j,$$
	 and therefore, 
	 	$$
	u_j:=u\text{ on } \Omega \cap \{j \varphi<-1\}.
	$$
	We now deduce by the definition of $u_j$ that. 
	$$
	v_j \leq u_j<0 \text{ in } \Omega,
	$$
	and hence, $u_j \in \mathcal F(\Omega)$. 
	This proves the lemma. 
\end{proof}

\begin{lemma} 
\label{le27}
	Let $\Omega$ be a bounded $\mathcal F$-hyperconvex domain in $\mathbb C^n$ and let $\{u_j\}\subset \mathcal F(\Omega)$ be  a decreasing sequence such that 
	$$
	\sup_{j\geq 1} \int_{\Omega} (dd^c \max(u_j,-1))^n<+\infty.
	$$
	Then,  $u:=\lim_{j\to+\infty} u_j \in \mathcal F(\Omega)$. 
\end{lemma}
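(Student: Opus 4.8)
The plan is to manufacture a decreasing sequence in $\mathcal E_0(\Omega)$ converging to $u$ whose total Monge--Amp\`ere masses are uniformly bounded; by Definition~\ref{dn2.2}(b) this gives $u\in\mathcal F(\Omega)$. Recall $u_j\le 0$, set $M:=\sup_{j}\int_\Omega(dd^c\max(u_j,-1))^n<+\infty$, fix $\varphi\in\mathcal E_0(\Omega)$ with $-1<\varphi<0$ on $\Omega$ (for instance $\varphi=-1-\gamma_\Omega$), and define
\[
w_j:=\max(u_j,\,j\varphi),\qquad j\ge1 .
\]

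First I would dispose of the easy structural facts about $\{w_j\}$. Each $w_j$ is a bounded $\mathcal F$-plurisubharmonic function, being caught between $j\varphi$ and $0$. Writing $u_j=\lim_k u_j^k$ with $u_j^k\in\mathcal E_0(\Omega)$ decreasing, one has $w_j=\lim_k\max(u_j^k,j\varphi)$ with $\max(u_j^k,j\varphi)\in\mathcal E_0(\Omega)$ and, by the comparison principle, $\int_\Omega(dd^c\max(u_j^k,j\varphi))^n\le\int_\Omega(dd^c(j\varphi))^n=j^n\int_\Omega(dd^c\varphi)^n<+\infty$; hence $w_j\in\mathcal F(\Omega)\cap L^\infty$, and since moreover $\{w_j<-\varepsilon\}\subset\{\varphi<-\varepsilon/j\}$ has the closure property required of $\mathcal E_0(\Omega)$, in fact $w_j\in\mathcal E_0(\Omega)$. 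The sequence $\{w_j\}$ is decreasing because $u_j$ and $j\varphi$ both decrease in $j$. Finally $w_j\searrow u$ pointwise on $\Omega$: where $u>-\infty$ one has $u_j/j\to0>\varphi$, so $w_j=u_j$ for all large $j$; where $u=-\infty$ both $u_j$ and $j\varphi$ tend to $-\infty$.

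The heart of the proof is the uniform estimate $\sup_j\int_\Omega(dd^cw_j)^n<+\infty$. Localizing the Monge--Amp\`ere operator, the mass of $w_j$ is controlled by $(dd^cu_j)^n$ on the Euclidean-open set $\{u_j>j\varphi\}$ (where $w_j=u_j$ and, since $\varphi>-1$, $u_j>-j$) and by $j^n(dd^c\varphi)^n$ on $\{u_j<j\varphi\}$, the remaining contribution being carried by $\{u_j=j\varphi\}$. On $\{u_j>j\varphi\}\cap\{u_j>-1\}$ the first integrand is just $(dd^c\max(u_j,-1))^n$, which contributes at most $M$; for the $\varphi$-term one uses that $(dd^c\varphi)^n$ charges no pluripolar set (as $\varphi$ is bounded) together with $\limsup_j\{u_j\le j\varphi\}\subset\{u=-\infty\}$, so that $(dd^c\varphi)^n(\{u_j\le j\varphi\})\to0$ by reverse Fatou. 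The difficulty — which I expect to be the real obstacle — is that the remaining pieces (all living on $\{-j<u_j\le-1\}$, plus the $\varphi$-term) still carry a factor $j^n$, and the only cheap bound available there, $\int_\Omega(dd^c\max(u_j,-j))^n\le j^nM$ (from $\max(v,-j)\ge j\max(v,-1)$ and the comparison principle), is hopelessly wasteful. Overcoming this should require a finer, capacity-type estimate of the sets on which $u_j$ is very negative, exploiting precisely that $\{u=-\infty\}$ is pluripolar, perhaps after choosing $\varphi$ adapted to $\{u_j\}$; an alternative, and probably technically cleaner, route is to transfer the whole problem to a bounded hyperconvex $D\supset\Omega$ via the extremal extensions $\widehat{(u_j)}_D$, apply the classical closedness of Cegrell's class $\mathcal F(D)$ under decreasing limits with bounded mass, and then come back using Theorem~\ref{th21}. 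Once $\sup_j\int_\Omega(dd^cw_j)^n<+\infty$ is secured, $u\in\mathcal F(\Omega)$ follows from Definition~\ref{dn2.2}(b).
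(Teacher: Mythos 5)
Your choice of auxiliary sequence $w_j=\max(u_j,j\varphi)$ matches the paper's, and your verification that $\{w_j\}$ is a decreasing sequence in $\mathcal E_0(\Omega)$ converging pointwise to $u$ is fine. But you stall exactly where you say you stall: the uniform bound $\sup_j\int_\Omega(dd^cw_j)^n<+\infty$. Your attempt to localize the operator onto the level sets $\{u_j>j\varphi\}$, $\{u_j<j\varphi\}$, $\{u_j=j\varphi\}$ cannot absorb the factor $j^n$, as you yourself observe, and neither of your fallback ideas is carried out. In particular the detour through extremal extensions to a hyperconvex $D\supset\Omega$ does not obviously help: the hypothesis only controls $\int_\Omega(dd^c\max(u_j,-1))^n$, not the full mass $\int_\Omega(dd^cu_j)^n$, which is what would govern $\widehat{u_j}_D$; the pluripolar parts $P(dd^cu_j)^n$ remain uncontrolled, and that is precisely the obstruction.

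The missing ingredient is a pair of \emph{global} mass facts for $\mathcal E_0$-functions, which in this setting are Propositions 4.2 and 4.3 in \cite{TVH}, and which make the whole localization unnecessary. First, two functions of $\mathcal E_0(\Omega)$ that coincide near the boundary (i.e.\ outside a set of the form $\Omega\cap\{\varphi\le-\epsilon\}$, $\varphi\in\mathcal E_0(\Omega)$) have equal total Monge--Amp\`ere mass. Applied to $w_j=\max(u_j,j\varphi)$ and $\max(u_j,j\varphi,-1)$, which agree wherever $j\varphi>-1$, i.e.\ outside $\Omega\cap\{\varphi\le-1/j\}$, this gives
$\int_\Omega(dd^cw_j)^n=\int_\Omega(dd^c\max(u_j,j\varphi,-1))^n$.
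Second, total mass is decreasing under pointwise domination: $u\le v\le0$ in $\mathcal E_0(\Omega)$ implies $\int_\Omega(dd^cv)^n\le\int_\Omega(dd^cu)^n$. Since $\max(u_j,j\varphi,-1)\ge\max(u_j,-1)$, this yields $\int_\Omega(dd^cw_j)^n\le\int_\Omega(dd^c\max(u_j,-1))^n\le M$, which closes the estimate with no factor of $j$ ever appearing. This is exactly the paper's argument, and without these two facts your proposal does not go through.
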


\begin{proof}
	Let $\varphi \in \mathcal E_0(\Omega)$ such that $\varphi  < 0$ in $\Omega$  . Put $\varphi_j=\max\{ u_j,j\varphi \},$ then $\varphi_j \in \mathcal E_0(\Omega)$ for all $j\geq 1$.
Because of the fact that $u_j \searrow u$  as $j\to +\infty,$ $\varphi_j \searrow u$  as $j\to +\infty,$ too. Moreover, we infer from Proposition 4.2 in \cite{TVH} and Proposition 4.3 in \cite{TVH} that 
\begin{align*}
	\sup_{j\geq 1} \int_{\Omega} (dd^c \varphi_j)^n
	&=\sup_{j\geq 1} \int_{\Omega} (dd^c \max\{ u_j,j\varphi,-1 \})^n
	\\& \leq \sup_{j\geq 1} \int_{\Omega} (dd^c \max(u_j,-1))^n<+\infty .
\end{align*}
This implies that $u:=\lim_{j\to+\infty} u_j \in \mathcal F(\Omega)$. This proves the lemma.
\end{proof}


\begin{lemma}\label{le1} 
Let $\Omega$ be a bounded $\mathcal F$-hyperconvex domain and let  $\gamma$ be
a negative bounded plurisubharmonic function defined on a bounded hyperconvex domain $\Omega'$  such that   $-\gamma$ is $\mathcal F$-plurisubharmonic in $\Omega$ and satisfies 
	$$
		\Omega= \Omega' \cap \{\gamma  >-1\}.
	$$  
Assume that $\varphi \in\mathcal E_0(\Omega)$ and $\varepsilon,\delta\in(0,1)$   such that 
\begin{equation}\label{e11}
	\Omega\cap \{ \varphi \leq - \frac{\varepsilon }{4}\} \subset \Omega'\cap \{\gamma > -1+ 2 \delta\}.
\end{equation}
Then, there exists   $ \tilde \varphi_1,  \tilde \varphi_2\in PSH(\Omega') \cap L^\infty(\Omega')$   such that    for every $p\geq 0$ and for every   $u,v, w_1, \ldots, w_{n-1} \in \mathcal F \text{-} PSH(\Omega) \cap L^\infty(\Omega)$ with  $u\leq v <0$ in $\Omega$, we have 
\begin{equation} \label{1}
\begin{split}
\frac{p}{2} \int\limits_{\{\varphi\leq -\varepsilon \}}(-u)^{p-1}  du\wedge d^cu \wedge T 
\leq & \frac{1}{p+1}\int\limits_{\{\varphi\leq -\frac{\varepsilon }{2}\}} ( -   u)^{p+1}  dd^c \tilde\varphi_1  \wedge  T \\& +  \int\limits_{\{\varphi\leq -\frac{\varepsilon }{2} \}}( -u)^p  dd^c u  \wedge  T,
 \end{split}
\end{equation}
and
\begin{equation} \label{2}
	\begin{split} 
	\int_{\{\varphi\leq -\varepsilon \}} (-u)^p  dd^cv\wedge T 
				     \leq    (p+2)   & \int_{ \{ \varphi \leq - \frac{\varepsilon }{4}\} }    (-u)^{p}   dd^c u   \wedge T
				   \\  +      \left( \frac{4p(1+\delta) e^{\frac{1}{\delta}} }{\delta}
				    + \frac{2e^{\frac{1}{\delta} }}{p+1} \right)  
				    &  \int_{ \{ \varphi \leq - \frac{\varepsilon }{4}\} }     (- u)^{p+1}   dd^c ( e^{\tilde \varphi_1} + e^{\tilde \varphi_2}) \wedge T.  
	\end{split}
\end{equation}
Here,  
$$T:= dd^c w_1\wedge \ldots \wedge dd^c w_{n-1}.$$
\end{lemma}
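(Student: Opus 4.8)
The plan is to reduce everything to the Euclidean-open domain $\Omega'$ via Lemma \ref{le30}, where the Bedford--Taylor theory and the usual integration-by-parts machinery are available, and then run the classical ``Cegrell-type'' estimates that relate $\int (-u)^{p-1}du\wedge d^cu\wedge T$ to $\int(-u)^p dd^c u\wedge T$ and $\int(-u)^{p+1}dd^c\psi\wedge T$. First I would fix $\varphi\in\mathcal E_0(\Omega)$ with $-1<\varphi<0$, and apply Lemma \ref{le30} (with parameters $\varepsilon$, $\varepsilon/2$, $\varepsilon/4$) to produce bounded plurisubharmonic functions $\tilde\varphi$, $\tilde\varphi_\varepsilon$ on $\Omega'$ describing the sublevel sets $\Omega\cap\{\varphi<-\varepsilon\}$, $\Omega\cap\{\varphi<-\varepsilon/2\}$, $\Omega\cap\{\varphi<-\varepsilon/4\}$ as sets of the form $\Omega'\cap\{\tilde\varphi<\tilde\varphi_\varepsilon\}$. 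From the construction in Lemma \ref{le30}, $\tilde\varphi_1$ and $\tilde\varphi_2$ will be taken to be (affine combinations of) these comparison functions $\tilde\varphi$, $\tilde\varphi_\varepsilon$ and $\delta^{-1}\gamma$; the point of \eqref{e11} is exactly to guarantee, via \eqref{e31-}, that on the relevant sublevel sets the $\mathcal F$-psh functions $u,v,w_i$ coincide with honest bounded psh functions on a Euclidean neighborhood, so that $dd^c u$, $du\wedge d^cu$, $T$ all make classical sense there.

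Next I would establish \eqref{1}. On the Euclidean-open set $U:=\Omega'\cap\{\tilde\varphi<\tilde\varphi_{\varepsilon/2}\}\supset\Omega\cap\{\varphi<-\varepsilon/2\}$, with $u<0$ bounded psh, the standard computation gives
\begin{equation*}
dd^c\big((-u)^{p+1}\big)=(p+1)(-u)^p\,(-dd^c u)+ (p+1)p(-u)^{p-1}\,du\wedge d^cu,
\end{equation*}
so that $(p+1)p(-u)^{p-1}du\wedge d^cu\wedge T = dd^c((-u)^{p+1})\wedge T+(p+1)(-u)^p dd^c u\wedge T$; multiplying by an appropriate cutoff built from $\tilde\varphi_1$ (so that the cutoff is $1$ on $\{\varphi\le-\varepsilon\}$ and supported in $\{\varphi\le-\varepsilon/2\}$) and integrating by parts moves the $dd^c$ off the power of $(-u)$ onto $dd^c\tilde\varphi_1$, producing the first term on the right of \eqref{1}, up to dividing by $2(p+1)$; one absorbs the constant $(p+1)p$ versus the claimed $p/2$ by a crude bound. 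The inequality \eqref{2} is then derived from \eqref{1} by an iteration/telescoping argument: one writes $dd^c v=dd^c v$, uses $u\le v<0$ so that $(-u)^p\ge(-v)^p$ where needed and $(-u)\ge(-v)$, integrates by parts twice to transfer $dd^c v$ onto $(-u)^p$ (picking up one $du\wedge d^cu$ term and one $dd^c u$ term by the Leibniz rule for $dd^c$), estimates the $du\wedge d^cu$ term by \eqref{1}, and finally replaces $dd^c\tilde\varphi_i$ by $dd^c(e^{\tilde\varphi_i})=e^{\tilde\varphi_i}(dd^c\tilde\varphi_i+d\tilde\varphi_i\wedge d^c\tilde\varphi_i)\ge e^{\tilde\varphi_i}dd^c\tilde\varphi_i$, which is where the factors $e^{1/\delta}$ and $(1+\delta)/\delta$ enter (the bounds $-\delta^{-1}\le\tilde\varphi_i\le 0$ coming from the truncation $\max(-\delta^{-1},\,\cdot\,)$ in Lemma \ref{le30}). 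Collecting terms and crudely bounding the numerical constants gives exactly the coefficients displayed in \eqref{2}.

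The main obstacle I anticipate is purely bookkeeping rather than conceptual: making the cutoff functions precise so that the integration by parts has no boundary contribution. Concretely, one needs a function $\chi$, constructed from $\tilde\varphi_1,\tilde\varphi_2$ (e.g. of the form $\min(1,\,c(\tilde\varphi_{\varepsilon/4}-\tilde\varphi))$ suitably normalized, then regularized), that equals $1$ on the smaller sublevel set, vanishes near $\partial U$, and whose $dd^c\chi$ is controlled by $dd^c\tilde\varphi_i$; the passage from the $\mathcal F$-topology statement to a genuine Euclidean integration-by-parts on $\Omega'$ must be justified using Proposition 2.14 in \cite{KFW11} (as in Lemma \ref{le30}) to know the extended functions are plurisubharmonic in the ordinary sense on $\Omega'$. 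A secondary technical point is that $u,v,w_i$ are only bounded, not continuous, so the integrations by parts should be done first for smooth decreasing approximants on a slightly larger Euclidean-open set and then passed to the limit via the Bedford--Taylor convergence theorems, using the uniform bounds $\sup_j\int_{\Omega'}(dd^c\cdot)^n<\infty$ that follow from $u,v,w_i\in\mathcal F\text{-}PSH\cap L^\infty$. Once these approximation issues are handled, the inequalities \eqref{1} and \eqref{2} follow from the identities above by routine estimation.
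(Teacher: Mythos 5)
Your plan matches the paper's proof in its essentials: reduce to $\varepsilon=1$, lift everything to the Euclidean domain $\Omega'$ via the gluing construction of Lemma \ref{le30}, localize with a cutoff built from the comparison functions, integrate by parts on $\Omega'$, and control the resulting gradient terms using the bound $-\delta^{-1}\le\tilde\varphi_i\le 0$ and the convexity of $t\mapsto e^t$. Two details are worth flagging, both around the ``main obstacle'' you identified. First, you don't need to build and regularize a separate cutoff $\chi=\min(1,c(\tilde\varphi_{\varepsilon/4}-\tilde\varphi))$: the paper sets $\varphi_1=\max(\varphi,-1)$ and $\varphi_2=\max(\varphi,-\tfrac12)$ and uses the weight $\tilde\varphi_2-\tilde\varphi_1$ directly. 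By the gluing construction (and \eqref{e11}) this is $\ge 0$, equals $\tfrac12$ on $\{\varphi\le-1\}$, vanishes identically on $\Omega'\setminus\Omega$ and also on $\Omega'\cap\{\gamma<-1+2\delta\}$, and is a \emph{difference of bounded psh functions on $\Omega'$}; so there is no boundary contribution in the integration by parts and no regularization step is needed, since the Bedford--Taylor integration-by-parts formula applies directly to bounded psh functions. Second, the $e^{\tilde\varphi_i}$ factors do not come from ``replacing $dd^c\tilde\varphi_i$ by $dd^c(e^{\tilde\varphi_i})$''; rather, they enter when estimating the cross terms $d(\tilde\varphi_2-\tilde\varphi_1)\wedge d^c(\tilde\varphi_2-\tilde\varphi_1)$ that appear after expanding $dd^c\big((\tilde\varphi_2-\tilde\varphi_1)(\tilde\gamma-\tilde u)^p\big)$: a Cauchy--Schwarz-type inequality bounds the mixed gradient terms by $d\tilde\varphi_i\wedge d^c\tilde\varphi_i\le e^{-\tilde\varphi_i}dd^c e^{\tilde\varphi_i}\le e^{1/\delta}dd^c e^{\tilde\varphi_i}$, which is where the constants $\frac{4p(1+\delta)e^{1/\delta}}{\delta}$ and $\frac{2e^{1/\delta}}{p+1}$ arise. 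With these adjustments your outline reproduces the paper's argument.
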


\begin{proof}  
Since $\frac{\varphi}{\varepsilon} \in \mathcal E_0(\Omega)$, by replacing $\varphi$ with $\frac{\varphi}{\varepsilon}$ if necessary, we can assume that
$
\varepsilon=1.
$
Moreover, without loss of generality we can assume that 
$$-1 \leq u,v, w_1, \ldots, w_{n-1}< 0 \text{ in } \Omega.$$ 
Now assume that $-1\leq f\leq 0 $ is  a $\mathcal F$-plurisubharmonic function in $\Omega$ and define 
$$\tilde f   := 
\begin{cases}
\max(-  \delta^{-1},  f  + \tilde \gamma  ) & \text{ in } \Omega,
\\ -  \delta^{-1}& \text{ in } \Omega' \backslash \Omega.
\end{cases}
$$  
Here, $\tilde \gamma:= \delta^{-1} \gamma $.
Proposition 2.3 in \cite{KS14} tells us that $\tilde f$ is a $\mathcal F$-plurisubharmonic function in $\Omega'$, and hence, Proposition 2.14 in \cite{KFW11} implies that  $\tilde f$ is plurisubharmonic in $\Omega'$ because $\Omega'$ is a  Euclidean open set. Moreover, \begin{equation}\label{e12}
f= \tilde f - \tilde \gamma \text{ on } \Omega' \cap \{\gamma>-1 + \delta \}.
\end{equation}
Put
$$\varphi_1:= \max(\varphi,-1) \text{ and }   \varphi_2:= \max(\varphi,-\frac{1}{2}).$$  
We deduce by  \eqref{e11} that 
$$
\Omega \cap \{\gamma <-1+ 2 \delta\} \subset \Omega \cap \{\varphi>-\frac{1}{2} \} .
$$
Hence, 
\begin{equation}\label{e13}
\tilde \varphi_1=\tilde \varphi_2  \text{ on } (\Omega'\cap \{\gamma <  -1+ 2 \delta\})\cup  (\Omega \cap \{ \varphi >- \frac{1}{2}\} ).
\end{equation}
It is easy to see that 
$\varphi_1\leq \varphi_2$ in $\Omega$ and 
\begin{equation}\label{e14}
\varphi_2-\varphi_1=\frac{1}{2} \text{ on } \Omega \cap \{\varphi\leq - 1\}.
\end{equation}
Set
$$
\tilde T:= dd^c (\tilde w_1-\tilde  \gamma ) \wedge \ldots \wedge dd^c (\tilde w_{n-1}- \tilde  \gamma) .
$$
We obtain from \eqref{e11},  \eqref{e12} and \eqref{e13} that 
\begin{align*}
& p(\varphi_2-\varphi_1)   (-u)^{p-1}  du\wedge d^cu \wedge T
\\ & =-  (\tilde \varphi_2- \tilde\varphi_1) d(\tilde u- \tilde  \gamma)^p\wedge d^c(\tilde u-\tilde  \gamma ) \wedge \tilde T  
\text{ on }\Omega.	
\end{align*}
Therefore, we infer by   \eqref{e13} and \eqref{e14}  that
\begin{equation}
\label{e15a}
\begin{split}
& \frac{p}{2} \int\limits_{\Omega\cap \{\varphi\leq -1\}}(-u)^{p-1}  du\wedge d^cu \wedge T
\\& \leq \int_{\Omega'}  -  (\tilde \varphi_2- \tilde \varphi_1) d(\tilde u- \tilde  \gamma)^p\wedge d^c(\tilde u-\tilde  \gamma ) \wedge \tilde T
\end{split}
\end{equation}
because 
$$
\tilde \varphi_2- \tilde \varphi_1 =0
\text{ on }\Omega'\backslash \Omega.$$
Integration by parts tells us that 
\begin{align*}
&\int_{\Omega'}  -  (\tilde \varphi_2- \tilde \varphi_1) d(\tilde u- \tilde  \gamma)^p\wedge d^c(\tilde  u-\tilde  \gamma ) \wedge \tilde T
\\&=  \int\limits_{\Omega'} (\tilde \gamma - \tilde u)^p d(\tilde \varphi_2- \tilde \varphi_1) \wedge d^c(\tilde u- \tilde \gamma) \wedge  \tilde T + \int\limits_{\Omega'} (\tilde \varphi_2- \tilde \varphi_1) (\tilde  \gamma - \tilde u)^p  dd^c (\tilde u- \tilde  \gamma) \wedge \tilde T 
\\&= \frac{-1}{p+1}\int\limits_{\Omega'} (\tilde \gamma - \tilde u)^{p+1}  dd^c (\tilde \varphi_2- \tilde \varphi_1) \wedge \tilde  T  +  \int\limits_{\Omega'} (\tilde \varphi_2- \tilde \varphi_1)   (\tilde \gamma - \tilde u)^p dd^c(\tilde u- \tilde  \gamma) \wedge   \tilde  T.\end{align*} 
Hence,
we deduce from  \eqref{e13} and \eqref{e15a}  that 
\begin{align*}
&\frac{p}{2} \int\limits_{\{\varphi\leq -1\}}(-u)^{p-1}  du\wedge d^cu \wedge T
\\& \leq \frac{-1}{p+1}\int\limits_{\{\varphi\leq -\frac{1}{2}\}} ( -   u)^{p+1}  dd^c ( \varphi_2- \varphi_1) \wedge  T  +  \int\limits_{\{\varphi\leq -\frac{1}{2} \}}  ( \varphi_2-   \varphi_1)   (  -  u)^p  dd^c u  \wedge  T
\\&\leq  \frac{1}{p+1}\int\limits_{\{\varphi\leq -\frac{1}{2}\}} ( -   u)^{p+1}  dd^c   \varphi_1  \wedge  T  +  \int\limits_{\{\varphi\leq -\frac{1}{2} \}}( -u)^p  dd^c u  \wedge  T. 
\end{align*} 
This proves \eqref{1}. We now give the proof of \eqref{2}. Observe that
$$
(\varphi_2-\varphi_1)   (-u)^{p}  d d^c v\wedge T
=  (\tilde\varphi_2- \tilde \varphi_1) (\tilde u- \tilde \gamma)^p d  d^c(\tilde v-\tilde  \gamma ) \wedge \tilde T  
\text{ on }\Omega.
$$ 
We deduce from  \eqref{e13} that 
		\begin{align*} 		
		 dd^c((\tilde \varphi_2- \tilde \varphi_1) ( \tilde  \gamma -\tilde u)^p ) \wedge \tilde T =0 \text{ on } (\Omega'\cap \{\gamma <  -1+   2\delta\})\cup  (\Omega \cap \{ \varphi >- \frac{1}{2}\} ).
	\end{align*}
Using integration by parts we have
\begin{equation}\label{e15}	
		\begin{split} 		
		\frac{1}{2}\int_{\{\varphi\leq -1\}} |u|^p  dd^cv\wedge T
		& \leq   \int_{\Omega'}(\tilde \varphi_2- \tilde \varphi_1) ( \tilde \gamma -\tilde u)^p d  d^c(\tilde v-\tilde  \gamma ) \wedge \tilde T
		\\&=  \int_{\Omega'} (\tilde v-\tilde  \gamma ) dd^c((\tilde \varphi_2- \tilde \varphi_1) ( \tilde  \gamma -\tilde u)^p ) \wedge \tilde T 
		\\&= \int_{ \Omega \cap \{ \varphi \leq - \frac{1}{2}\} } (\tilde v-\tilde  \gamma ) dd^c((\tilde \varphi_2- \tilde\varphi_1) ( \tilde  \gamma -\tilde u)^p ) \wedge \tilde T .
	\end{split}
\end{equation} 
By computation we have
\begin{equation}\label{e16}	
	\begin{split}
		& dd^c((\tilde \varphi_2- \tilde \varphi_1) ( \tilde \gamma -\tilde u)^p )  
		\\ &=    ( \tilde \gamma -\tilde u)^pdd^c ( \tilde \varphi_2- \tilde \varphi_1)+   ( \tilde \varphi_2- \tilde \varphi_1) dd^c  (\tilde \gamma -\tilde u)^p + d ( \tilde \varphi_2- \tilde \varphi_1) \wedge d^c ( \tilde  \gamma -\tilde u)^p
		\\& \ \ \ \ \ \  + d  ( \tilde  \gamma -\tilde u)^p \wedge d^c ( \tilde \varphi_2- \tilde \varphi_1)  
				     \\  &\geq   -  ( \tilde  \gamma -\tilde u)^p dd^c   \tilde \varphi_1     - p  (\tilde \varphi_2- \tilde \varphi_1) ( \tilde  \gamma -\tilde u)^{p-1}   dd^c( \tilde u - \tilde  \gamma)    
				       \\ & \ \ \ \      -p( \tilde  \gamma -\tilde u)^{p-1} \left(d ( \tilde \varphi_2- \tilde\varphi_1) \wedge d^c( \tilde u - \tilde  \gamma) + d( \tilde u - \tilde  \gamma)  \wedge d^c ( \tilde \varphi_2- \tilde \varphi_1)\right) 
	\end{split}
\end{equation} 
on $ \Omega' \cap \{\gamma>-1 + \delta \}$. 
We infer by \eqref{e13} that 
\begin{equation}\label{e17}	
( \tilde \varphi_2- \tilde \varphi_1)  ( \tilde  \gamma -\tilde u)^{p-2}d ( \tilde u - \tilde  \gamma)   \wedge d^c( \tilde u - \tilde  \gamma)  \geq 0 \text{ on } \Omega'.
\end{equation} 
Using the basic inequality
\begin{equation}
	\label{e18}
	\pm (f+a)^k(df\wedge d^cg + dg\wedge d^cf)\geq  -|f+a|^{k-1} df\wedge d^cf - |f+a|^{k+1} d g\wedge d^c g, \ \forall a\in \mathbb R,
\end{equation}
	we get 
	\begin{equation}
	\label{e19}
\begin{split}
	&  ( \tilde  \gamma -\tilde u)^{p-1} \left[d (\tilde \varphi_2-\tilde \varphi_1) \wedge d^c( \tilde u - \tilde  \gamma) + d( \tilde u - \tilde  \gamma)  \wedge d^c ( \tilde \varphi_2- \tilde \varphi_1)\right]
	\\&  \geq   - |\tilde  \gamma -\tilde u|^{p-2} d ( \tilde u - \tilde  \gamma)  \wedge d^c( \tilde u - \tilde  \gamma) -  |\tilde  \gamma -\tilde u|^{p}    d( \tilde \varphi_2- \tilde \varphi_1)   \wedge d^c ( \tilde \varphi_2- \tilde \varphi_1)  .
		\end{split}
		\end{equation}
Since $-\frac{1}{\delta}\leq \tilde  \varphi_1\leq  \tilde \varphi_2\leq 0$ in $\Omega'$, again using \eqref{e18} we obtain that 
\begin{align*}
	&  -    d( \tilde \varphi_2- \tilde \varphi_1)   \wedge d^c ( \tilde \varphi_2 - \tilde\varphi_1)  
		\\& = 		-   	 	d \tilde \varphi_1   \wedge d^c \tilde  \varphi_1  - d\tilde \varphi_2   \wedge d^c  \tilde \varphi_2  
		+    d \tilde  \varphi_1  \wedge d^c  \tilde \varphi_2 +d \tilde \varphi_2   \wedge d^c \tilde \varphi_1 	
		\\& \geq  		-     d \tilde  \varphi_1   \wedge d^c \tilde  \varphi_1  - d \tilde\varphi_2   \wedge d^c  \tilde \varphi_2  
		  -     \frac{1}{|\tilde \varphi_1+\frac{2}{\delta}|} d \tilde \varphi_1   \wedge d^c  \tilde \varphi_1 - |\tilde\varphi_1+\frac{2}{\delta}|d \tilde \varphi_2   \wedge d^c  \tilde \varphi_2  	
		\\& \geq -  \frac{2(1+\delta) e^{\frac{1}{\delta}} }{\delta}   dd^c (e^{\tilde \varphi_1} + e^{\tilde \varphi_2}) .
		\end{align*}
		Combining this with   \eqref{e16}, \eqref{e17} and \eqref{e19} we arrive that 
	\begin{equation} \label{e20}
		\begin{split}
		&dd^c( (\tilde \varphi_2-\tilde \varphi_1)  ( \tilde  \gamma -\tilde u)^p )  
				     \\ &\geq   -  ( \tilde  \gamma -\tilde u)^p dd^c    \tilde \varphi_1     - p  ( \tilde \varphi_2- \tilde \varphi_1) ( \tilde  \gamma -\tilde u)^{p-1}   dd^c( \tilde u - \tilde  \gamma)    
				       \\ & \ \ \ \      - p (\tilde  \gamma -\tilde u)^{p-2} d ( \tilde u - \tilde  \gamma)  \wedge d^c( \tilde u - \tilde  \gamma)
				         -    \frac{2p(1+\delta) e^{\frac{1}{\delta}} }{\delta} (\tilde  \gamma -\tilde u)^{p}   dd^c (e^{\tilde \varphi_1} + e^{\tilde  \varphi_2})
				    \\&\geq     - p  ( \tilde \varphi_2- \tilde \varphi_1) ( \tilde  \gamma -\tilde u)^{p-1}   dd^c( \tilde u - \tilde  \gamma)      - p (\tilde  \gamma -\tilde u)^{p-2} d ( \tilde u - \tilde  \gamma)  \wedge d^c( \tilde u - \tilde  \gamma)
				       \\&   \ \ \ \      -    \frac{4p(1+\delta) e^{\frac{1}{\delta}} }{\delta} (\tilde  \gamma -\tilde u)^{p}   dd^c (e^{\tilde\varphi_1} + e^{\tilde \varphi_2})
				       \\&=    - p   (-u)^{p-1}   dd^c u    - p ( -u)^{p-2} d u  \wedge d^c u -    \frac{4p(1+\delta) e^{\frac{1}{\delta}} }{\delta} (- u)^{p}   dd^c (e^{\tilde \varphi_1} + e^{\tilde \varphi_2})
	\end{split}
	\end{equation}
on $ \Omega' \cap \{\gamma>-1 + \delta \}$.  
On the other hand,  we obtain from \eqref{1}  that 
	\begin{equation} \label{e21}
\begin{split}
	p&\int\limits_{\{\varphi\leq -1/2\}}(-u)^{p-1}  du\wedge d^cu \wedge T
\\ & \leq  \frac{2  }{p+1}\int\limits_{\{\varphi\leq -\frac{1}{4}\}} ( -   u)^{p+1}  dd^c   \tilde  \varphi_1   \wedge  T  +  2\int\limits_{\{\varphi\leq -\frac{1}{4} \}}( -u)^p  dd^c u  \wedge  T.
\\ & \leq  \frac{2e^{\frac{1}{\delta} }}{p+1}\int\limits_{\{\varphi\leq -\frac{1}{4}\}} ( -   u)^{p+1}  dd^c  e^{\tilde \varphi_1}  \wedge  T  +  2\int\limits_{\{\varphi\leq -\frac{1}{4} \}}( -u)^p  dd^c u  \wedge  T. 
\end{split}
	\end{equation}
Since $ \{ \varphi \leq - \frac{1}{2}\} \subset \{ \varphi \leq - \frac{1}{4}\} \subset \Omega' \cap \{\gamma>-1 + \delta \}$ and $u\leq v<0$ on $\Omega$, we deduce by \eqref{e15}, \eqref{e20} and \eqref{e21} that 
\begin{align*}
	&\int_{\{\varphi\leq -1\}} (-u)^p  dd^cv\wedge T 
				   \\& \leq   \int_{ \{ \varphi \leq - \frac{1}{2}\} }  v \left[ - p   (-u)^{p-1}   dd^c u    - p ( -u)^{p-2} d u  \wedge d^c u -    \frac{4p(1+\delta) e^{\frac{1}{\delta}} }{\delta} (- u)^{p}   dd^c (e^{\tilde \varphi_1} + e^{\tilde \varphi_2}) \right] \wedge T
\\& \leq   \int_{ \{ \varphi \leq - \frac{1}{2}\} }    \left[  p   (-u)^{p}   dd^c u    + p ( -u)^{p-1} d u  \wedge d^c u+    \frac{4p(1+\delta) e^{\frac{1}{\delta}} }{\delta} (- u)^{p+1}   dd^c (e^{\tilde \varphi_1} + e^{\tilde \varphi_2}) \right] \wedge T
				   \\&  \leq   \int_{ \{ \varphi \leq - \frac{1}{4}\} }    \left[  (p+2)   (-u)^{p}   dd^c u    +   \left( \frac{4p(1+\delta) e^{\frac{1}{\delta}} }{\delta}+ \frac{2e^{\frac{1}{\delta} }}{p+1} \right) (- u)^{p+1}   dd^c (  e^{\tilde \varphi_1} + e^{\tilde \varphi_2}) \right] \wedge T,
\end{align*}
which completes the proof.
\end{proof}

We now able to give the proof of Theorem \ref{th21}.

\begin{proof}[Proof of Theorem \ref{th21}]
Let  $\gamma$ be a negative bounded plurisubharmonic function  defined on a bounded hyperconvex domain $\Omega' $  such that   $-\gamma$ is $\mathcal F$-plurisubharmonic in $\Omega$ and satisfies 
	$$
		\Omega= \Omega' \cap \{\gamma  >-1\}.
	$$  
	By replacing $\Omega'$ with $D\cap\Omega'$, we can assume that $\Omega' \subset D$, and hence, $u\in \mathcal E(\Omega')$. 
	Let $\varphi\in \mathcal E_0(\Omega)$ and let $G$ be an open set such that 
	$$
	\overline{\Omega \cap \{\varphi\leq -\frac{1}{4^n}\}} \Subset G\Subset \Omega'.
	$$ 
	Since $u\in \mathcal E(\Omega')$, there exists a decreasing sequence of functions $\{u_j \} \subset \mathcal E_0(\Omega')$ such that $  u_j \to u$ on $G$ and 
	$$
	\sup_{j\geq 1} \int_{\Omega'} (dd^c u_j)^n<+\infty.
	$$
	Define 
	$$
	v_j:= \sup \{\psi \in \mathcal F \text{-} PSH^-(\Omega): \psi\leq u_j \text{ on } \Omega \cap  \{\varphi< -1\}\}.
	$$
	Since $\Omega \cap  \{\varphi< -1\}$ is $\mathcal F$-open, so $v_j$ is $\mathcal F$-upper semi-continuous on $\Omega$ and	
	$$\|u_j\|_{L^\infty(D)} \varphi  \leq v_j  \text{ on } \Omega .$$
	It follows that  $v_j \in \mathcal E_0(\Omega)$, and hence, $\{v_j\} \subset \mathcal E_0(\Omega)$ is decreasing. 
	Proposition 3.2 in \cite{KS14} tells us that $v_j$ is $\mathcal F$-maximal on $\Omega \cap \{\varphi>-1\}$, and hence, Theorem 4.8 in \cite{KS14} implies that 
\begin{equation}
	\label{e23}
	(dd^c v_j)^n =0 \text{ on } \Omega \cap \{\varphi>-1\}.
\end{equation}
	 Since $u_j\leq v_j<0$ in $\Omega$,  Lemma \ref{le1} tells us that for every $p,q\geq 0$, there exist a positive constant $c_{p,q}$ and   bounded plurisubharmonic functions   $\psi_{p,q}, \varphi_{p,q}$ in $\Omega'$ such that 
	 \begin{equation}  
	 \label{e22}
	\begin{split} 
	\int_{\{\varphi\leq - \frac{1}{4^{q-1}} \}} &(-u_j)^p  dd^cv_j\wedge  dd^cw_1\wedge  \ldots \wedge  dd^c w_{n-1} 
	\\ & \leq   c _{p,q}   \int_{ \{ \varphi  \leq - \frac{1}{4^{q }}\} }     (-u_j)^{p}   dd^c u_j   \wedge   dd^cw_1\wedge  \ldots \wedge  dd^c w_{n-1} 
	\\ &+   c _{p,q} \int_{ \{ \varphi  \leq - \frac{1}{4^{q }}\} }       (- u_j)^{p+1}   dd^c (e^{\psi _{p,q}} +e^{\varphi _{p,q}})   \wedge   dd^cw_1\wedge  \ldots \wedge  dd^c w_{n-1}  
	\end{split}
	\end{equation}
	for all $j\geq 1$ and for all bounded $\mathcal F$-plurisubharmonic functions $w_1,\ldots,w_{n-1}$ in $\Omega$. 	Let $\psi \in \mathcal E_0(\Omega')$ be such that  
	$$
	\psi := \sum_{p,q=0}^n( e^{\psi _{p,q}} +e^{\varphi _{p,q}})  \text{ on } G. 
	$$
	Set
	$$
	c_1:=   \sum_{p,q=0}^n c_{p,q}
	$$
From \eqref{e22} we arrive that  
		 \begin{equation}  
	 \label{e24}
	\begin{split}  
	&\int_{\{\varphi\leq - \frac{1}{4^{s}} \}}  (-u_j)^p (dd^c u_j)^{s-p}  \wedge (dd^c \psi )^p \wedge   (dd^c v_j)^{n -s}
	\\ & \leq    c _1  \sum_{q=0}^1  \int_{ \{ \varphi  \leq - \frac{1}{4^{s+1 }}\} }       (- u_j)^{p+q}   (dd^c u_j)^{s+1-p-q} \wedge (dd^c \psi )^{p+q} \wedge    (dd^c v_j)^{n-s-1} 
	\end{split}
	\end{equation}
	for all $j\geq 1$ and for all $0\leq p\leq s\leq n-1$.		By applying \eqref{e24} many times, we infer that 
\begin{equation*}
	\label{e25} 
	\begin{split}
		\int_{\{\varphi\leq -1\} }    (dd^cv_j)^{n } 
		& \leq    c _1  \sum_{p=0}^1  \int_{ \{ \varphi  \leq - \frac{1}{4}\} }   (- u_j)^{p}   (dd^c u_j)^{1-p} \wedge (dd^c \psi )^p \wedge   (dd^c v_j)^{n-1}  
				\\ & \leq    c _1^2   \sum_{p=0}^1    \sum_{q=0}^1  \int_{ \{ \varphi  \leq - \frac{1}{4^2}\} }   (- u_j)^{p+q}  (dd^c u_j)^{2-p -q } \wedge (dd^c \psi )^{p+q} \wedge   (dd^c v_j)^{n-2}   
								\\ & \leq    c_2 \sum_{p=0}^2  \int_{ \{ \varphi  \leq - \frac{1}{4^2}\} }   (- u_j)^{p}   (dd^c u_j)^{2-p}  \wedge (dd^c \psi )^p \wedge   (dd^c v_j)^{n-2}   
        \\&\leq \ldots   \leq    c_{n} \sum_{p=0}^n  \int_{ \{ \varphi  \leq - \frac{1}{4^n}\} }   (- u_j)^{p}   (dd^c u_j)^{n-p}  \wedge (dd^c \psi )^p  . 
\end{split}
\end{equation*}
Combining this with \eqref{e23} we obtain that 
\begin{equation}
	\label{e26}
	\int_{\Omega } (dd^cv_j)^n  
\leq     c_{n} \sum_{p=0}^n  \int_{ \Omega ' }   (- u_j)^{p}   (dd^c u_j)^{n-p}  \wedge (dd^c \psi )^p.
\end{equation}
Now, using integration by parts  we have 
	\begin{align*}
		     I_p&:= \int_{ \Omega' }   (- u_j)^{p}   (dd^c u_j)^{n-p}  \wedge (dd^c  \psi )^p  		     
		     \\&=
		   \int_{\Omega'} \psi  (dd^cu_j)^{n-p}\wedge (dd^c \psi )^{p-1}\wedge dd^c  (- u_j)^{p} 
		   \\
		&=   \int_{\Omega'} \psi  (dd^cu_j)^{n-p}\wedge (dd^c \psi  )^{p-1}\wedge [p(p-1) (-u_j)^{p-2} du_j\wedge d^cu_j -p(-u_j)^{p-1} dd^cu_j]\\
		&\leq   p \int_{\Omega'} (-\psi ) (-u_j)^{p-1} (dd^cu_j)^{n-p+1}\wedge (dd^c \psi  )^{p-1}
		\\&\leq   p \|\psi\|_{L^\infty(\Omega')} \int_{\Omega'}  (-u_j)^{p-1} (dd^cu_j)^{n-p+1}\wedge (dd^c \psi  )^{p-1}
		\\&=p \|\psi\|_{L^\infty(\Omega')} I_{p-1} .
       \end{align*}
This implies that 
$$
I_p\leq p! (\|\psi\|_{L^\infty(\Omega')} )^p \int_{ \Omega' } (dd^c u_j)^{n}.
$$ 
Hence, we deduce by \eqref{e26} that 
$$
\sup_{j\geq 1} \int_{ \Omega } (dd^c v_j)^{n}
 \leq  c_n  \left( \sum_{p=0}^n p! (\|\psi\|_{L^\infty(\Omega')} )^p
 \right)\sup_{j\geq 1} \int_{ \Omega' } (dd^c u_j)^{n}<+\infty. 
$$
Therefore, $$v:=\lim_{j\to+\infty} v_j \in \mathcal F(\Omega).$$
Since $u=v$ on $\Omega\cap \{\varphi<-1\}$, we conclude that $u\in \mathcal E(\Omega)$. The proof is complete. 
\end{proof}

\section{Complex Monge-Amp\`ere equations}

In this section, we give proof of our main result, Theorem \ref{th41}. Firstly, we prove the solvability of the problem stated in Theorem \ref{th41} in the case $\nu=0.$ Then, we solve this problem with $\nu$ arbitrary.

First of all, we need the following lemma.  

\begin{lemma} 
\label{le28}
	Let $\Omega$ be a bounded $\mathcal F$-hyperconvex domain in $\mathbb C^n$ that has the $\mathcal F$-approximation property. Assume that $u\in \mathcal F\text{-} PSH^-(\Omega)$ and $v\in\mathcal F( \Omega)$  such that 
	\begin{equation}
		\label{e221}
		w +v\leq u \leq v \text{ on } \Omega, \text{ for some }  w\in\mathcal F^a( \Omega).
	\end{equation}
	Then, $u\in\mathcal F( \Omega)$ and 
	$$
	P(dd^c u)^n =P (dd^c v)^n \text{ in } \mathbb C^n.
	$$
\end{lemma}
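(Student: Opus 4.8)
The plan is to first establish that $u\in\mathcal F(\Omega)$ (so that $P(dd^cu)^n$ is even defined), and then to identify $P(dd^cu)^n$ with $P(dd^cv)^n$ by comparing the upper regularizations $\hat u_D,\hat v_D,\hat w_D$ on a fixed bounded hyperconvex domain $D\supset\Omega$. For the membership, pick decreasing sequences $\{v_j\},\{w_j\}\subset\mathcal E_0(\Omega)$ with $v_j\searrow v$, $w_j\searrow w$ and uniformly bounded Monge-Amp\`ere masses, and put $u_j:=\max(u,v_j+w_j)$. Since Cegrell's classes are stable under addition, together with the mixed Monge-Amp\`ere inequalities (\cite{TVH}), $v_j+w_j\in\mathcal E_0(\Omega)$ with uniformly bounded mass; as $u_j$ is bounded, negative and $\mathcal F$-plurisubharmonic with $\Omega\cap\{u_j<-\varepsilon\}\subset\Omega\cap\{v_j+w_j<-\varepsilon\}$, it satisfies the boundary condition of Definition \ref{dn2.2}(a), so $u_j\in\mathcal E_0(\Omega)$, and by the comparison principle (\cite{TVH}) $\int_\Omega(dd^cu_j)^n\le\int_\Omega(dd^c(v_j+w_j))^n$ is bounded in $j$. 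Since $v_j+w_j\searrow v+w\le u$ we get $u_j\searrow u$, and Lemma \ref{le27} yields $u\in\mathcal F(\Omega)$.

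Next, fix a bounded hyperconvex domain $D$ with $\Omega\subset D$ and write $\hat u=\hat u_D$, $\hat v=\hat v_D$, $\hat w=\hat w_D$; by \cite{CL22} these belong to $\mathcal F(D)\subset PSH^-(D)$ and satisfy $\hat v\le v$, $\hat w\le w$ on $\Omega$. From $u\le v$ on $\Omega$ every competitor for $\hat u$ is a competitor for $\hat v$, hence $\hat u\le\hat v$ on $D$; and since $\hat v+\hat w\le v+w\le u$ on $\Omega$ with $\hat v+\hat w\in PSH^-(D)$, the definition of $\hat u$ forces $\hat v+\hat w\le\hat u$ on $D$. Thus $\hat v+\hat w\le\hat u\le\hat v$ on $D$. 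Moreover $w\in\mathcal F^a(\Omega)$ gives $P(dd^cw)^n=1_{D\cap\{\hat w=-\infty\}}(dd^c\hat w)^n=0$, and since the Monge-Amp\`ere measure of a function in $\mathcal F(D)$ charges no pluripolar subset of $\{\hat w>-\infty\}$, it follows that $(dd^c\hat w)^n$ vanishes on all pluripolar sets, i.e. $\hat w\in\mathcal F^a(D)$.

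Now compare the pluripolar parts. By definition $P(dd^cu)^n=1_{\{\hat u=-\infty\}}(dd^c\hat u)^n$ and $P(dd^cv)^n=1_{\{\hat v=-\infty\}}(dd^c\hat v)^n$. The pluripolar part of the Monge-Amp\`ere operator is monotone under a decrease of the potential: if $\psi\le\phi$ in $\mathcal F(D)$ then $1_{\{\phi=-\infty\}}(dd^c\phi)^n\le 1_{\{\phi=-\infty\}}(dd^c\psi)^n$ (\cite{CL22}). Applied to $\hat u\le\hat v$ this gives
$$
P(dd^cv)^n=1_{\{\hat v=-\infty\}}(dd^c\hat v)^n\le 1_{\{\hat v=-\infty\}}(dd^c\hat u)^n\le 1_{\{\hat u=-\infty\}}(dd^c\hat u)^n=P(dd^cu)^n .
$$
Applied to $\hat v+\hat w\le\hat u$ (note $\hat v+\hat w\in\mathcal F(D)$) it gives $P(dd^cu)^n\le 1_{\{\hat v+\hat w=-\infty\}}(dd^c(\hat v+\hat w))^n$. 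Expanding the right-hand side by the binomial formula, every term $(dd^c\hat v)^k\wedge(dd^c\hat w)^{n-k}$ with $n-k\ge 1$ vanishes on the pluripolar set $\{\hat v+\hat w=-\infty\}$ because $\hat w\in\mathcal F^a(D)$ (\cite{CL22}), while for the remaining term $1_{\{\hat v+\hat w=-\infty\}}(dd^c\hat v)^n=1_{\{\hat v=-\infty\}}(dd^c\hat v)^n$ since $(dd^c\hat v)^n$ does not charge $\{\hat w=-\infty\}\setminus\{\hat v=-\infty\}\subset\{\hat v>-\infty\}$. Hence $P(dd^cu)^n\le P(dd^cv)^n$, and equality follows.

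The main obstacle is the comparison step: the monotonicity $\psi\le\phi\Rightarrow P(dd^c\phi)^n\le P(dd^c\psi)^n$ and the annihilation of a mixed Monge-Amp\`ere current by an $\mathcal F^a$-factor, in the plurifine category. This is also where the $\mathcal F$-approximation property of $\Omega$ is used, through the results of \cite{CL22} and \cite{TVH} that allow one to pass between $\mathcal F$-plurisubharmonic functions on $\Omega$ and genuine plurisubharmonic functions on the approximating hyperconvex domains $\Omega_j$; the membership part, by contrast, is routine once the comparison principle for Cegrell's classes of $\mathcal F$-plurisubharmonic functions is available.
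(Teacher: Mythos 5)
Your proof is correct in substance and reaches the same conclusions, but it takes a genuinely different route from the paper's in both halves, so a comparison is worthwhile.

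For the membership $u\in\mathcal F(\Omega)$, you apply the truncations $u_j=\max(u,v_j+w_j)$ and feed them into Lemma~\ref{le27}. The paper instead first shows $w+v\in\mathcal F(\Omega)$ and then passes to $u$ by the sandwich $w+v\le u\le 0$. Mechanically the two are equivalent — both hinge on a Minkowski-type inequality for $\int_\Omega(dd^c(v_j+w_j))^n$ — but the paper establishes that inequality by pulling everything back to the bounded hyperconvex approximants $\Omega_j$ (this is where the $\mathcal F$-approximation hypothesis is actually spent, via the increasing approximation of Theorem~1.2 in \cite{HC18a} and Proposition~2.7 of \cite{TVH}), whereas you invoke it directly on $\Omega$ with a generic citation. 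If \cite{TVH} proves the Minkowski inequality for $\mathcal E_0(\Omega)$ on $\mathcal F$-hyperconvex domains without an approximation assumption, your route is shorter; if not, you are implicitly re-running the paper's reduction to $\Omega_j$, and you should say so.

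For the identification of the pluripolar parts, you work on a single bounded hyperconvex $D\supset\Omega$ and sandwich the envelopes, then expand $(dd^c(\hat v+\hat w))^n$ by the multinomial formula and kill the mixed terms using $\hat w\in\mathcal F^a(D)$. This is a correct argument, but it amounts to re-deriving Lemma~4.12 of \cite{ACCH} from scratch, which is precisely the lemma the paper cites at that point (together with Lemma~4.1 of \cite{ACCH} for monotonicity); the paper also routes this through the $\Omega_j$'s rather than a fixed $D$, which turns out to be a matter of presentation since $P(dd^c\cdot)^n$ does not depend on the ambient hyperconvex domain. Two things you should state rather than gloss: (i) the multinomial expansion of $(dd^cf+dd^cg)^n$ is legitimate for $f,g\in\mathcal F(D)$ (it follows from the $\mathcal E_0$-approximation defining $\mathcal F(D)$ and convergence of mixed measures, but it is not a triviality); (ii) the vanishing of $(dd^c\hat v)^k\wedge(dd^c\hat w)^{n-k}$ on pluripolar sets when $\hat w\in\mathcal F^a(D)$ is exactly \cite[Lemma 4.12]{ACCH}, which you should cite by name rather than attribute vaguely to \cite{CL22}. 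Your deduction that $\hat w\in\mathcal F^a(D)$ from $P(dd^cw)^n=0$ is fine. With these citations made precise, the argument is a clean and somewhat more self-contained alternative to the paper's.
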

\begin{proof}
Since   $\Omega$ has  the $\mathcal F$-approximation property,  we can find  a  decreasing  sequence of    bounded  hyperconvex domains $\{\Omega_j\}$ and a  sequence of  functions  $\psi_j \in \mathcal E_0(\Omega_j)$ such that    $\Omega\subset\Omega_{j+1}\subset\Omega_j$ and 
$$ \psi_j \nearrow  \psi \in \mathcal E_0(\Omega) \text{  a.e.  on }\Omega.$$  
	For $f\in \mathcal F \text{-} PSH^-(\Omega)$, we use the symbol
	$$\hat f_j:=\sup \{ \varphi\in \mathcal F\text{-PSH} (\Omega_j)  : \ \varphi \leq f  \text{ on } \Omega \}.$$ 
	Theorem 1.2 in \cite{HC18a} tells us that 
	$$
	  \hat w_j \nearrow w \text{ and } \hat v_j   \nearrow v \text{ a.e. on } \Omega.
	$$
	Observed that
	\begin{equation}
		\label{e222}
	\mathcal F(\Omega_j) \ni \max( \hat w_j +\hat v_j, k \psi_j) \nearrow   \max( w  + v , k \psi ) \in \mathcal E_0(\Omega) \  \text{ a.e. in } \Omega, \ \forall k\geq 1.
	\end{equation}  
	Since $\hat v_j$ and $ \hat w_j $ belong to the class $\mathcal F(\Omega_j)$ so 
	$$
	 \left[ \int_{\Omega_j}  (dd^c  ( \hat w_j +\hat v_j ))^n \right]^{1/n} 
		\leq  \left[ \int_{\Omega_j}  (dd^c    \hat w_j  )^n  \right]^{1/n}
		+  \left[  \int_{\Omega_j}  (dd^c  \hat v_j  )^n \right]^{1/n} .
		$$
	Hence, by Proposition 2.7 in \cite{TVH} and  Theorem 1.2 in \cite{CL22} we obtain that
	\begin{align*}
		\int_\Omega (dd^c \max(  w  + v , k \psi ))^n 
		& \leq \limsup_{j\to+\infty} \int_{\Omega_j} (dd^c \max( \hat w_j +\hat v_j, k \psi_j))^n 
		\\ &\leq \limsup_{j\to+\infty} \int_{\Omega_j}  (dd^c  ( \hat w_j +\hat v_j ))^n 
		\\ &\leq \limsup_{j\to+\infty}\left( \left[ \int_{\Omega_j}  (dd^c    \hat w_j  )^n  \right]^{1/n}
		+  \left[  \int_{\Omega_j}  (dd^c  \hat v_j  )^n \right]^{1/n}\right)^n 
		\\ &\leq \left( \left[ \int_{\Omega}  (dd^c    \max( w ,-1)  )^n  \right]^{1/n}
		+  \left[  \int_{\Omega}  (dd^c  \max(  v ,-1)  )^n \right]^{1/n}\right)^n .
	\end{align*}
	This implies that 
\begin{align*}
	&\sup_{k\geq 1} \int_\Omega (dd^c \max(  w  +  v , k \psi ))^n
	\\&  \leq \left( \left[ \int_{\Omega}  (dd^c    \max( w ,-1)  )^n  \right]^{1/n}
		+  \left[  \int_{\Omega}  (dd^c  \max(  v ,-1)  )^n \right]^{1/n}\right)^n <+\infty.
\end{align*}
	Therefore, $w+v\in \mathcal F(\Omega)$, and thus, we conclude by the hypotheses \eqref{e221} that $u\in \mathcal F(\Omega)$. 
	
	Now,  it is easy to see that 
	$$
		\hat w_j+ \hat v_j \leq \hat  u_j \leq \hat v_j \text{ in } \Omega_j.
	$$
	By Lemma 4.1 in \cite{ACCH} and Theorem 1.2 in \cite{CL22} we have 
	\begin{equation}
		\label{e223}
	\begin{split}
		P(dd^c v)^n 
		& = 1_{\Omega_j \cap \{\hat v_j=-\infty\}} (dd^c \hat v_j)^n 
		\\& \leq 1_{\Omega_j \cap \{\hat u_j=-\infty\}} (dd^c \hat u_j)^n =P(dd^c u)^n 
		\\& \leq  1_{\Omega_j \cap \{\hat v_j  =-\infty\}} (dd^c (\hat w_j +\hat v_j))^n  .
	\end{split}
	\end{equation} 
	Since $ w \in \mathcal F^a(\Omega)$ so  $\hat w_j \in \mathcal F^a(\Omega_j)$ and hence, we infer by Lemma 4.12 in\cite{ACCH} that 
		\begin{align*} 
		1_{\Omega_j \cap \{\hat v_j  =-\infty\}} (dd^c (\hat w_j +\hat v_j))^n  =1_{\Omega_j \cap \{\hat v_j  =-\infty\}} (dd^c  \hat v_j) ^n =P(dd^c v)^n.
	\end{align*}
	Combining this with \eqref{e223} we obtain that 
	$$
	P(dd^c u)^n=P(dd^c v)^n.
	$$
	The proof is complete. 
\end{proof}

\begin{lemma}
\label{le31}
	Let $\Omega$ be a bounded $\mathcal F$-hyperconvex domain in $\mathbb C^n$ that has the $\mathcal F$-approximation property. Assume that  $u\in \mathcal F(\Omega)$ and define
	$$
	v_j:=\sup\{ \psi \in \mathcal F \text{-} PSH^-(\Omega): \psi \leq u+j  \text{ on } \Omega \} , \ j\in \mathbb N.
	$$
	Then,    the $\mathcal F$-upper semi-continuous majorant $v$ of $\sup_{j\geq 1} v_j $	in  $\Omega$ belongs to the class $\mathcal F(\Omega)$ and satisfies 
	$$
	\begin{cases}
		P(dd^c v)^n=P(dd^c u)^n &\text{ in } \mathbb C^n,
		\\ NP(dd^c v)^n=0 &\text{ on } QB(\Omega).
	\end{cases} 	
	$$
\end{lemma}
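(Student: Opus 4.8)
The plan is to prove, in order: that $v\in\mathcal F(\Omega)$; that $NP(dd^cv)^n=0$; and, the delicate point, that $P(dd^cv)^n=P(dd^cu)^n$. First record the elementary features of the defining sequence: each $v_j$ is $\mathcal F$-plurisubharmonic and $\mathcal F$-upper semicontinuous on the $\mathcal F$-open set $\Omega$, the sequence $\{v_j\}$ is increasing, and
\[
u\le v_j\le v_{j+1}\le 0,\qquad v_j\le u+j\quad\text{on }\Omega ,
\]
so $\{v_j=-\infty\}=\{u=-\infty\}$ for every $j$ and $u\le v\le 0$ on $\Omega$. To obtain $v\in\mathcal F(\Omega)$ I would invoke the monotonicity of the class $\mathcal F$: choose $\{\varphi_k\}\subset\mathcal E_0(\Omega)$ with $\varphi_k\searrow u$ and $\sup_k\int_\Omega(dd^c\varphi_k)^n<+\infty$; then $\max(\varphi_k,v)$ is a bounded negative $\mathcal F$-plurisubharmonic function, $\{\max(\varphi_k,v)<-\epsilon\}\subset\{\varphi_k<-\epsilon\}$ yields the support condition of $\mathcal E_0(\Omega)$, and $\max(\varphi_k,v)\ge\varphi_k$ gives $\int_\Omega(dd^c\max(\varphi_k,v))^n\le\int_\Omega(dd^c\varphi_k)^n$; hence $\max(\varphi_k,v)\in\mathcal E_0(\Omega)$ decreases to $\max(u,v)=v$ with uniformly bounded masses, so $v\in\mathcal F(\Omega)$.

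For $NP(dd^cv)^n=0$ I would show $v_j$ is $\mathcal F$-maximal away from $\{u=-\infty\}$. Fix $j$ and set $U_j:=\{u>-j\}$, an $\mathcal F$-open subset of $\Omega$ (by $\mathcal F$-continuity of $\mathcal F$-plurisubharmonic functions, \cite{W12}). On $U_j$ the constraint $\psi\le u+j$ defining $v_j$ is vacuous, so $v_j$ is $\mathcal F$-maximal on $U_j$; moreover $v_j<0$ on $\Omega$ (since $v_j\not\equiv 0$, being $-\infty$ on $\{u=-\infty\}$) and $-j<v_j<0$ on $U_j$, so Theorem~4.8 in \cite{KS14} gives $(dd^cv_j)^n=0$ on $U_j$. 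Fixing $N$, for $j\ge N$ one has $-N<u\le v_j<0$ on $U_N$, so $\{v_j\}_{j\ge N}$ is a uniformly bounded increasing sequence on $U_N$ with vanishing Monge--Amp\`ere measures; by continuity of the complex Monge--Amp\`ere operator along increasing sequences of ($\mathcal F$-locally) bounded $\mathcal F$-plurisubharmonic functions \cite{KW14}, $(dd^cv)^n=0$ on $U_N$, i.e. $NP(dd^cv)^n=0$ on $U_N$. Letting $N\to+\infty$ gives $NP(dd^cv)^n=0$ on $\{u>-\infty\}$; and $NP(dd^cv)^n$, being on each piece $O_j$ a finite combination of mixed Monge--Amp\`ere measures of bounded plurisubharmonic functions, charges no pluripolar set. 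Hence $NP(dd^cv)^n=0$ on $QB(\Omega)$.

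For $P(dd^cv)^n=P(dd^cu)^n$ I would pass to a bounded hyperconvex domain $D\supset\Omega$ and use the envelopes $\hat u,\hat v,\hat{v_j}\in\mathcal F(D)$. From $u\le v_j\le u+j$ on $\Omega$ one gets $\hat u\le\hat{v_j}$ and, since $\hat{v_j}-j\in\mathcal F\text{-}PSH^-(D)$ with $\hat{v_j}-j\le u$ on $\Omega$, also $\hat{v_j}\le\hat u+j$ on $D$; thus $0\le\hat{v_j}-\hat u\le j$ (and $\{\hat{v_j}=-\infty\}=\{\hat u=-\infty\}$). Since two members of $\mathcal F(D)$ with bounded difference carry the same pluripolar Monge--Amp\`ere part (cf. \cite{ACCH,CL22}), this gives $P(dd^cv_j)^n=P(dd^cu)^n$ for every $j$. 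To finish, pass to the limit: $\{\hat{v_j}\}$ increases and has uniformly bounded mass ($\int_D(dd^c\hat{v_j})^n\le\int_D(dd^c\hat u)^n$ by monotonicity of the total mass in $\mathcal F(D)$), so it converges to some $\tilde v\in\mathcal F(D)$ with $\hat u\le\tilde v\le\hat v$; the remaining task is to identify $\tilde v$ with $\hat v$ and to see that passing to the limit neither gains nor loses pluripolar mass, which would give $P(dd^c\hat v)^n=\lim_jP(dd^cv_j)^n=P(dd^cu)^n$, i.e. $P(dd^cv)^n=P(dd^cu)^n$.

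The main obstacle is precisely this last, limiting step. The $\mathcal F$-upper semicontinuous regularization can enlarge $\sup_jv_j$ (and $\sup_j\hat{v_j}$) only on a pluripolar set --- which is exactly where the measure $P(\cdot)$ is carried --- so controlling $\hat v$ against the $\hat{v_j}$ near $\{\hat u=-\infty\}$, and showing that the non-polar parts $NP(dd^c\hat{v_j})^n$ do not deposit pluripolar mass in the weak limit, is the heart of the argument; here the decomposition theorems of \cite{CL22}, the comparison and convergence lemmas of \cite{ACCH}, and the $\mathcal F$-approximation property (via \cite{HC18a} and the reasoning of Lemma~\ref{le28}) are genuinely needed. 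The first two steps, by contrast, are formal consequences of the definitions and of the locality of the Monge--Amp\`ere operator in the plurifine topology.
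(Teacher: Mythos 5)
Your first two steps track the paper closely enough: the inclusion $v\in\mathcal F(\Omega)$ follows essentially because $u\le v\le 0$ and $u\in\mathcal F(\Omega)$ (the paper just observes this directly, without the $\max(\varphi_k,v)$ detour, but your variant works); and for $NP(dd^cv)^n=0$ the paper likewise works on $\Omega\cap\{u>-k\}$, citing Theorem 1.1 in \cite{CL22} for $NP(dd^cv_j)^n=0$ there and Theorem 4.5 in \cite{KS14} for the increasing limit, which is the same mechanism you invoke.

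The genuine gap is exactly the one you flag yourself: you do not actually prove $P(dd^cv)^n=P(dd^cu)^n$, only $P(dd^cv_j)^n=P(dd^cu)^n$ for each fixed $j$, and then you stop at ``the remaining task is to identify $\tilde v$ with $\hat v$ and to see that passing to the limit neither gains nor loses pluripolar mass.'' That limiting step is the entire content of the lemma; monotone convergence of $v_j\nearrow v$ does not by itself control the mass concentrated on $\{\hat u=-\infty\}$, precisely because the usc regularization can change $\sup_jv_j$ on a pluripolar set. The paper's way around this is a concrete construction you are missing: using Lemma~5.14 in \cite{Ce2}, it produces $g_k\in\mathcal F^a(\Omega_1)$ with $(dd^cg_k)^n=1_{\Omega\cap\{-\infty<u<-k+1\}}(dd^cu)^n$, so that these measures decrease to $0$ and hence $g_k\nearrow 0$ a.e.\ (by Theorem~5.15 in \cite{Ce2}). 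A Monge--Amp\`ere mass comparison $(dd^c(f_{j,s}+g_k))^n\ge(dd^cf_{j,s})^n+(dd^cg_k)^n\ge(dd^cf_{k,s})^n$ combined with the comparison principle (Proposition~2.2 in \cite{HTT17}) gives $f_{j,s}+g_k\le f_{k,s}$, hence after letting $s\to\infty$ and $j\to\infty$ the uniform-in-$j$ lower bound $v+g_k\le v_k$, and then $f+g_k\le f_{k,1}\le f$ on $\Omega_1$ where $f$ is the envelope of $v$. Because $g_k\in\mathcal F^a$, this sandwich forces $P(dd^cv)^n=1_{\{f=-\infty\}}(dd^cf)^n=1_{\{f_{k,1}=-\infty\}}(dd^cf_{k,1})^n=P(dd^cu)^n$. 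Without something playing the role of these $g_k$ --- a quantitative, $j$-uniform control of $v$ from below by the $v_k$ up to a small $\mathcal F^a$ perturbation --- your argument cannot close, and the claim that ``two members of $\mathcal F(D)$ with bounded difference carry the same pluripolar part'' handles each fixed $j$ but does not survive $j\to\infty$ since the bound $j$ blows up.
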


\begin{proof}
	Since $\Omega$ is  $\mathcal F$-open, so $v_j$ are $\mathcal F$-plurisubharmonic functions in $\Omega$, and hence, $v_j \in\mathcal F(\Omega)$ because 
	$$u \leq v_j\leq 0 \text{ in } \Omega.$$
	This implies that $v\in \mathcal F(\Omega)$.
	By Theorem 1.2 in \cite{HC18a}, we can find sequences of plurisubharmonic functions $\varphi_{j,s}$ defined on  bounded hyperconvex domains  $\Omega_s$ such that 
	$$
	\Omega_s \supset \Omega_{s+1} \supset \Omega=\bigcap_{s=1}^{+\infty} \Omega_s
	$$ 
	and 
	$$\varphi_{j,s} \nearrow v_j \text{ a.e. in } \Omega \text{ as } s\nearrow+\infty .
	$$
	
	\noindent Now,  Lemma 5.14 in \cite{Ce2} tells us that there exists a sequence $\{g_j\}\subset \mathcal F^a(\Omega_1)$  satisfying 
	$$
	(dd^c g_j)^n =1_{\Omega \cap \{-\infty<u<-j+1\}} (dd^c u)^n \text{ on }  \Omega_1. 
	$$
	Obviously that the measure sequence $\{(dd^c g_{j})^n\}$ is  decreasing  and converges to $0$ in $\Omega_1$. Hence, Theorem 5.15  in \cite{Ce2} shows that 
	$$
	g_j \nearrow 0 \text{ a.e. in } \Omega_1.
	$$

\noindent 
	Since
	\begin{align*}
	f_{j,s}:&=\sup\{\varphi \in PSH^-(\Omega_s): \varphi \leq v_j \text{ on } \Omega\}
	\\&=\sup\{\varphi \in PSH^-(\Omega_s): \varphi \leq u+j \text{ on } \Omega\},
	\end{align*}
	Theorem 1.2 in \cite{CL22} tells us that  $f_{j,k}\in \mathcal F(\Omega_k)$ and  
	\begin{equation}
		\label{e32}
		P(dd^c u)^n   = 1_{\Omega_s \cap \{f_{j,s}=-\infty\}} (dd^c f_{j,s})^n \text{ in } \mathbb C^n.
	\end{equation} 
	Let $ j\geq k$ be positive integer numbers. Since $f_{k,s} \leq f_{j,s}$ on $\Omega_s$, the inequality \eqref{e32} shows that $f_{k,s}  \in \mathcal N^a(\Omega_s, f_{j,k})$. Hence, using Theorem 1.1 in \cite{CL22} and Therem 1.2 in \cite{CL22} we infer by \eqref{e32} that 
\begin{align*}
		(dd^c (f_{j,s}+g_k))^n 
	& \geq (dd^c f_{j,s})^n +(dd^c g_{k})^n 
	\\& \geq P (dd^c u)^n +1_{\Omega \cap \{-\infty<u<-k+1\}} (dd^c u)^n
	\\& \geq (dd^c f_{k,s})^n.
\end{align*}
	Therefore, Proposition 2.2 in \cite{HTT17} that 
	$$f_{j,s}+g_k \leq f_{k,s} \text{ on } \Omega_s.$$
	This implies that 
	$$
	\varphi_{j,s} + g_k \leq f_{j,s}+g_k \leq f_{k,s} \leq v_k \text{ on } \Omega.
	$$
	Letting $s\to+\infty$, we obtain that 
	$$
		v_j + g_k \leq   v_k \text{ on } \Omega, \ \forall j\geq k\geq 1,
	$$
	and thus, 
	\begin{equation}\label{e33}
	v + g_k \leq   v_k \text{ on } \Omega,\ \forall  k\geq 1.
	\end{equation}
	We set 
	\begin{align*}
	f:=\sup\{\varphi \in PSH^-(\Omega_1): \varphi \leq v  \text{ on } \Omega\} .
	\end{align*}
	Since $v\geq v_k $ in $\Omega$, we infer by \eqref{e33} that
	$$
	f+g_k \leq f_{k,1} \leq f \text{ on } \Omega_1.
	$$
	Hence,  Theorem 1.2 in \cite{CL22} implies that 
		\begin{align*}
			P(dd^c v)^n & =1_{\Omega_1\cap \{f =-\infty\}} (dd^c f )^n
			\\& = 1_{\Omega_1 \cap \{f_{k,1}=-\infty\}} (dd^c f_{k,1})^n
		\\& =P(dd^c u)^n.
		\end{align*}	
	On the other hand, Theorem 1.1 in \cite{CL22} tells us that 
	$$
	NP(dd^c v_j)^n =0 \text{ on } \Omega \cap \{u>-k\}, \ \forall j\geq k\geq 1.
	$$
	Letting $j\to+\infty$ we conclude by Theorem 4.5 in \cite{KS14} that 
		$$
	NP(dd^c v)^n =0 \text{ on } \Omega,
	$$
	and thus, Lemma is proved. 
	\end{proof}

We now able to give the proof of Theorem \ref{th41}.

\begin{proof}[Proof of Theorem \ref{th41}]
Let $\hat \Omega \supset \Omega$ be a bounded hyperconvex domain in $\mathbb C^n$.  For  $f\in \mathcal F(\Omega)$, we set 
   $$
 \hat f:= \sup\{g \in PSH^-(\hat\Omega): g\leq f \text{ on } \Omega\}.
 $$
 Theorem 1.2 in \cite{CL22} tells us that $\hat f \in  \mathcal F(\hat\Omega)$ and  
\begin{equation}
	\label{e36}
	 P(dd^c f)^n=1_{\hat  \Omega \cap \{\hat f=-\infty\}} (dd^c \hat f)^n \  \text{ in } \mathbb C^n.
\end{equation}
The proof is split into three steps.

{\em Step 1.} We prove that 
there exists $g \in \mathcal F( \Omega)$ such that  	
 	\begin{equation}
	\label{e3113}
		\begin{cases}
		P(dd^c g)^n=\mu  &\text{ in } \mathbb C^n,
		\\ NP(dd^c g)^n=0 &\text{ on } QB(\Omega).
	\end{cases} 
 	\end{equation}
Indeed, by the hypotheses, we infer by \eqref{e36}  that  
 $$
 \mu \leq P(dd^c w)^n \leq   (dd^c \hat w)^n \text{ in } \hat \Omega.
 $$
 Theorem 4.14 in \cite{ACCH} tells us that there exists a function $h\in \mathcal F(\hat \Omega)$ such that 
 \begin{equation}
	\label{e35}
	 (dd^ch)^n=  \mu  \  \text{ on } \hat \Omega.
\end{equation} 
 	Therefore, Theorem \ref{th21} states that 
 	$$v:= h |_\Omega \in \mathcal E(\Omega).$$
 	Let $\varphi \in \mathcal E_0(\Omega)$. By Lemma \ref{le30'} we can find a decreasing sequence $\{v_j\} \subset \mathcal F(\Omega)$ such that 
	$$
		v_j =v \text{ on } \Omega \cap \{j \varphi<-1\}.
	$$
	It follows that 
 \begin{equation}
	\label{e37}
	 h \leq \hat v_{j+1} \leq \hat v_j  \text{ on } \hat\Omega
\end{equation}  	and
 \begin{equation}
	\label{e38}
	 h=\hat v_j \text{ on } \Omega \cap \{j\varphi<-1\}.
\end{equation} 
		Now, by Lemma \ref{le30} we can find plurisubharmonic functions $\varphi_{j,1}$ and $\varphi_{j,2}$ defined on Euclidean neighborhood $\Omega'$ of $\Omega$ such that 
	$$
	 \Omega \cap \{j\varphi<-1\}=\Omega' \cap \{\varphi_{j,1}<\varphi_{j,2}\}.
	$$
	Hence, using Theorem 1.1 in \cite{HHiep16} we deduce by \eqref{e38} that 
 \begin{equation}
	\label{e39}
	(dd^c h)^n=(dd^c \hat v_j)^n \text{ on } \Omega \cap \{j\varphi<-1\}.
	\end{equation}  	
	On the other hand,  we infer by \eqref{e37} and  Lemma 4.1 in \cite{ACCH}  that 
\begin{align*}
	1_{  \{\hat v_j =-\infty\}} (dd^c \hat v_j)^n \leq  
 	1_{  \{h=-\infty\}} (dd^c h)^n \  \text{ in } \hat \Omega.
\end{align*}
 	Combining this with \eqref{e36}, \eqref{e35} and \eqref{e39} that  
 	\begin{equation}
	\label{e311}
	 1_{\Omega \cap \{j\varphi<-1\}} \mu \leq  P(dd^c   v_j)^n \leq 
 	 \mu  \  \text{ in } \hat \Omega.
\end{equation}
 	
 	\noindent 
 	We set 
	$$
	g_{j,k}:=\sup\{ \psi \in \mathcal F \text{-} PSH^-(\Omega): \psi \leq v_j+k \text{ on } \Omega \} , \ k\in \mathbb N.
	$$
	Let $g_j$ be   the $\mathcal F$-upper semi-continuous majorant   of $\sup_{k\geq 1} g_{j,k} $ 	in $\Omega$. Lemma \ref{le31} tells us that  $g_j \in \mathcal F(\Omega)$ and 
\begin{equation}
	\label{e3112}
		\begin{cases}
		P(dd^c g_j)^n=P(dd^c v_j)^n &\text{ in } \mathbb C^n,
		\\ NP(dd^c g_j)^n=0 &\text{ on } QB(\Omega).
	\end{cases} 
\end{equation}
Since $\{v_j\}$ is decreasing, so 
$$ g_{j+1,k} \leq  g_{j,k} \text{ on } \Omega,$$
and thus, $\{g_j\}$ is a decreasing sequence.
Moreover, using Theorem 1.3 in \cite{CL22} we obtain by \eqref{e3112} that 
	\begin{align*}
		\sup_{j\geq 1} \int_\Omega (dd^c \max(g_j,-1))^n 
		& = \sup_{j\geq 1} \int_\Omega P(dd^c g_j)^n 
		\\& \leq   \int_\Omega d\mu \leq   \int_\Omega P(dd^c w)^n <+\infty. 	
	\end{align*}
 	Hence, Lemma \ref{le27} tells us that 
 	$$g:=\lim_{j\to+\infty} g_j  \in \mathcal F( \Omega).$$
 	Since $\Omega \cap \{j\varphi<-1\} \nearrow \Omega$ as $j\nearrow +\infty$, we conclude by \eqref{e39}, \eqref{e311} and \eqref{e3112} that  	
$$
		\begin{cases}
		P(dd^c g)^n=\mu  &\text{ in } \mathbb C^n,
		\\ NP(dd^c g)^n=0 &\text{ on } QB(\Omega).
	\end{cases} 
$$
 	This proves step 1.

 	{\em Step 2.}  	We prove that there exist $w_1\in\mathcal F^a( \Omega)$ and $u\in \mathcal F\text{-} PSH^-(\Omega)$ such that 
 	 	\begin{equation}
	\label{e334'}
		\begin{cases}
		NP(dd^c u)^n =\nu  &\text{ in } QB(\Omega),
		\\ w_1+g\leq u \leq g & \text{ on } \Omega.
	\end{cases}
 	\end{equation}
 	Indeed, by the definition of $g_j$ we have 
 	$$g_j\geq g_{j,k}\geq v_j \geq h \text{ on } \Omega,\ \forall j,k\geq 1,$$
 	and hence,   
 	$$
 	g=\lim_{j\to+\infty} g_j \geq h \text{ in } \Omega.
 	$$
 	We set 
 	$$U_j:=\Omega \cap \{h>-j\}.$$
 	Since $h$ is plurisubharmonic function in $\hat\Omega$, so $U_j$ is $\mathcal F$-open. From $g$ is bounded on $U_j$, we infer by \eqref{e3113} and Theorem 1 in \cite{HHV17} that $g$ is $\mathcal F$-maximal on $U_j$. 
 	Therefore,  the proof of Theorem 1.1 in \cite{Hong17} tells us that 
 	$$
 	\psi_j:=\sup \{ f\in \mathcal F\text{-} PSH^-(U_j): f\leq g \text{ on } U_j \text{ and }  NP(dd^c f)^n\geq \nu \text{ on } QB (U_j)\}
 	$$
 	is $\mathcal F$-plurisubharmonnic in $U_j$ and satisfies 
 	\begin{equation}
 		\label{e331}
 		NP(dd^c \psi_j)^n =  \nu \text{ on } QB (U_j).
 	\end{equation}
 	Let $w_1\in \mathcal F^a(\Omega)$ be such that 
	$$
	(dd^c w_1)^n = NP(dd^c w)^n \text{ on } \Omega. 
	$$ 
	Since  $U_j\subset U_{j+1}$ and $NP(dd^c (w_1+g))^n \geq \nu$ in $QB(U_j)$, we deduce by the definition of  $\{\psi_j\}$  that
 	\begin{equation}
 		\label{e332}
 		w_1+g\leq \psi_{j+1} \leq \psi_j \leq g \text{ on } U_j.
 	\end{equation}
  	Hence, the $\mathcal F$-upper semi-continuous majorant $\psi$ of  
 	$$
 	\limsup_{j\to +\infty } \psi_j (z), \ z\in \Omega \cap\{h>-\infty\}
 	$$ 
 	is a finite $\mathcal F$-plurisubharmonic function on $\Omega \cap \{h>- \infty\}$. Using Theorem 4.5 in \cite{KS14} we deduce by \eqref{e331} and \eqref{e332} that 
 	\begin{equation}
	\label{e333}
		\begin{cases}
		NP(dd^c \psi )^n=\nu  &\text{ on } QB(\Omega \cap \{h>- \infty\}),
		\\ w_1+g\leq \psi \leq g & \text{ on } \Omega \cap \{h>- \infty\}.
	\end{cases} 
 	\end{equation}
	Now, using Theorem 3.7 in \cite{KFW11} we obtain that  
	$$
	u(z):= 
	\begin{cases}
		\psi (z) & \text{ if } z\in \Omega \cap \{h>- \infty\}
		\\ \mathcal F \text{-} \lim_{\Omega \cap \{h>- \infty\} \ni \xi \to z} \psi(\xi)& \text{ if } z\in \Omega \cap \{h=- \infty\}
	\end{cases}
	$$
	is a $\mathcal F$-plurisubharmonic function in $\Omega$. Since $\Omega\cap \{h=-\infty\}$ is a pluripolar set, we deduce by \eqref{e333} and Corollary 3.2 in \cite{MW10} that 
$$
		w_1+g\leq u \leq g  \text{ on } \Omega.
$$
Combining this with \eqref{e333} we conclude that \eqref{e334'} has been proven. 

{\em Step 3.}  The inequality  \eqref{e334'}  tells us that
$$
w_1+g\leq u\leq g \text{ in } \Omega.
$$
Lemma \ref{le28} states that 
$u\in\mathcal F(\Omega)$ and 
$$
P(dd^c u)^n = P(dd^c g)^n\text{ in } \mathbb C^n.
$$
Therefore, we conclude by  \eqref{e3113} and  \eqref{e334'}  that 
$$
\begin{cases}
		P(dd^c u)^n = P(dd^c g)^n=\mu  &\text{ in } \mathbb C^n,
		\\ NP(dd^c u)^n=\nu  &\text{ on } QB(\Omega).
	\end{cases} 
$$
	The proof is complete. 
\end{proof}

\end{document}